\newcommand\dist{\operatorname{dist}}
\newcommand\tot{\operatorname{tot}}
\newcommand\In{\operatorname{in}}
\newcommand\cH{\mathcal H}
\newcommand\bc{\boldsymbol c}
\newcommand\btheta{\boldsymbol \theta}
\newcommand\tbB{\widetilde{\boldsymbol B}}
\newcommand\bl{\boldsymbol l}
\newcommand\bm{\boldsymbol m}
\newcommand\bv{\boldsymbol v}
\newcommand\bn{\boldsymbol n}
\newcommand\bt{\boldsymbol t}
\newcommand\bx{\boldsymbol x}
\newcommand\by{\boldsymbol y}
\newcommand\bA{\boldsymbol A}
\newcommand\bB{\boldsymbol B}
\newcommand\bV{\boldsymbol V}
\newcommand\bJ{\boldsymbol J}
\newcommand\bX{\boldsymbol X}
\newcommand\cW{\mathcal W}
\newcommand\cA{\mathcal{A}}
\newcommand\cC{\mathcal{C}}
\newcommand\cL{\mathcal{L}}
\newcommand\cS{\mathcal{S}}
\newcommand\cD{\mathcal{D}}
\newcommand\bbR{\mathbb R}
\newcommand\Npat{N_{\textrm{patches}}}
\newcommand\smax{s_{\textrm{max}}}
\newcommand\nmax{n_{\textrm{max}}}
\newcommand\smin{s_{\textrm{min}}}
\newcommand\niter{n_{\textrm{iter}}}
\newcommand\pa{\partial}
\DeclareMathOperator{\dR}{dR}
\newtheorem{theorem}{Theorem}
\theoremstyle{definition}
\theoremstyle{remark}
\newtheorem{remark}{Remark}
\numberwithin{equation}{section}
\crefname{equation}{}{}
\begin{document}


\begin{titlepage}

  \raggedleft
  {\texttt{Technical Report\\
    \today}}
  
  \hrulefill

  \vspace{4\baselineskip}

  \raggedright
  {\LARGE \sffamily\bfseries Debye source representations for type-I superconductors, I}
  
  \vspace{\baselineskip}
  {\Large \sffamily The static type I case}

  \vspace{3\baselineskip}
 \vspace{\baselineskip}
 
 \vspace{\baselineskip}
 
  \normalsize Charles L. Epstein \\
  \small \emph{Department of Mathematics, University of Pennsylvania\\
  Philadelphia, PA 19104}\\
  \texttt{cle@math.upenn.edu}
  
 \vspace{\baselineskip}
    \normalsize Manas Rachh \\
    \small \emph{Flatiron Institute, Center for Computational Mathematics \\
    New York, NY 10010}
    
    \texttt{mrachh@flatironinstitute.org}

\end{titlepage}

\tableofcontents
\newpage

In this note, we analyze the classical magneto-static approach to the theory of
\mbox{type I} superconductors, and a Debye source representation that can be used
numerically to solve the resultant equations. We also prove that one of
the fields, $\bJ^-$, found within the superconductor via the London equations, is
the physical current in that the outgoing part of the magnetic field is given as the
Biot-Savart integral of $\bJ^{-}$. Finally, we compute the static currents for moderate values of London penetration depth, $\lambda_L,$ for a sphere, a stellarator-like geometry and a two-holed torus. 

\section{The Model}
Suppose that the superconducting material is homogeneous, isotropic
and occupies a region $\Omega\subset \bbR^3.$ Within $\Omega$ there is a magnetic
field $\bB^-$, and a current $\bJ^-$, which satisfy the London equations given by:
\begin{equation}\label{eqn1}
  \nabla \times \bB^- = \bJ^- \, ,  \, \quad \text{and} \quad  \nabla \times \bJ^{-} = - \frac{1}{\lambda_L^2} \bB^{-} \,.
\end{equation}
The constant $\lambda_L$ is called the London penetration depth. For physical materials it lies in the $10-200$ nanometer range. A solution to these equations can be expressed, within $\Omega,$ via the Stratton-Chu formula, as an integral over the boundary of $\Omega$ (denoted by $\Gamma$) using the Green's function
\begin{equation}
  g_{\frac{i}{\Lambda_L}}(\bx,\by)=\frac{e^{-\frac{|\bx-\by|}{\lambda_L}}}{4\pi|\bx-\by|}.
\end{equation}
From this formula it follows that the solution falls off like
$e^{-\frac{\dist(\bx,\Gamma)}{\lambda_L}},$ which, for physically
reasonable values of $\lambda_L,$ means that the solutions are
almost zero outside of a very thin neighborhood of $\Gamma.$

Since $\nabla \cdot \nabla \times \bV = 0$ for any vector field $\bV$,~\cref{eqn1}
implies that
\begin{equation}\label{eqn2}
  \nabla \cdot \bB^{-} = \nabla \cdot \bJ^{-} = 0 \, .
\end{equation}
In the complementary region there is a static magnetic field $\bB^{\tot},$ which
satisfies the usual magneto-static equations 
\begin{equation}\label{eqn3}
  \nabla \times \bB^{\tot}=0\text{ and } \nabla \cdot \bB^{\tot}=0\text{ in }\Omega^c.
\end{equation}

While $\bJ^{-}$ is supported in a thin neighborhood of $\Gamma$, there is no current sheet on the boundary of $\Omega,$ and so the physically
reasonable boundary conditions are that
\begin{equation}\label{eqn_bc}
  \bB^{-} = \bB^{\tot} \quad \text{on } \Gamma \, ,
\end{equation}
i.e., the magnetic field is
continuous across the boundary.
 This boundary condition and $\nabla \times \bB^{\tot}=0$  imply that $\bB^{-}$ is a closed vector field on the manifold $\Gamma$ and thus satisfies
\begin{equation}
  \nabla_{\Gamma} \cdot \left( \bn \times\bB^{-} \right) = 0 \quad \text{on } \Gamma \, .
\end{equation}
Here $\nabla_{\Gamma} \cdot$ denotes the surface divergence operator on the boundary $\Gamma$
and $\bn$ is the outward pointing unit normal (pointing into $\Omega^{c}$) to $\Gamma$.
The London equation shows that $\bJ^- \cdot \bn=0$ is equivalent to
$\nabla_{\Gamma} \cdot \bn \times \bB^{-}=0.$ This theory, at least
in the static case, seems to be widely accepted, see~\cite{Cook2}, though it is restricted to circumstances where the induced fields in the superconductor remain below the critical level.

In the standard ``scattering'' problem for this set-up there is an incoming
magneto-static field $\bB^{\In},$ so that, in $\Omega^c,$
$\bB^{\tot}=\bB^{\In}+\bB^+,$ where $\bB^+$ is an outgoing field satisfying the radiation condition
\begin{equation}
  \left|\bB^+(\bx)\right|=o(|\bx|^{-1}).
\end{equation}
The boundary conditions in~\cref{eqn3} are then given by
\begin{equation}
  \bB^-= (\bB^+ + \bB^{\In}) \,  \quad \text{on } \Gamma.
\end{equation}
 If the domain $\Omega$ is non-contractible, then this problem does not have a
 unique solution. If $\Gamma$ has genus $g,$ then there are $g$ A-cycles
 $\{A_1,\dots,A_g\},$ which bound surfaces $\{S_{A_1},\dots,S_{A_g}\}$ contained
 within $\Omega,$ and $g$ B-cycles $\{B_1,\dots,B_g\},$ which bound surfaces
 $\{S_{B_1},\dots,S_{B_g}\}$ contained in $\Omega^c,$ see~\cref{fig:tor} for the genus 1 case.

\begin{figure}[h!]
    \centering
    \includegraphics[width=0.7\linewidth]{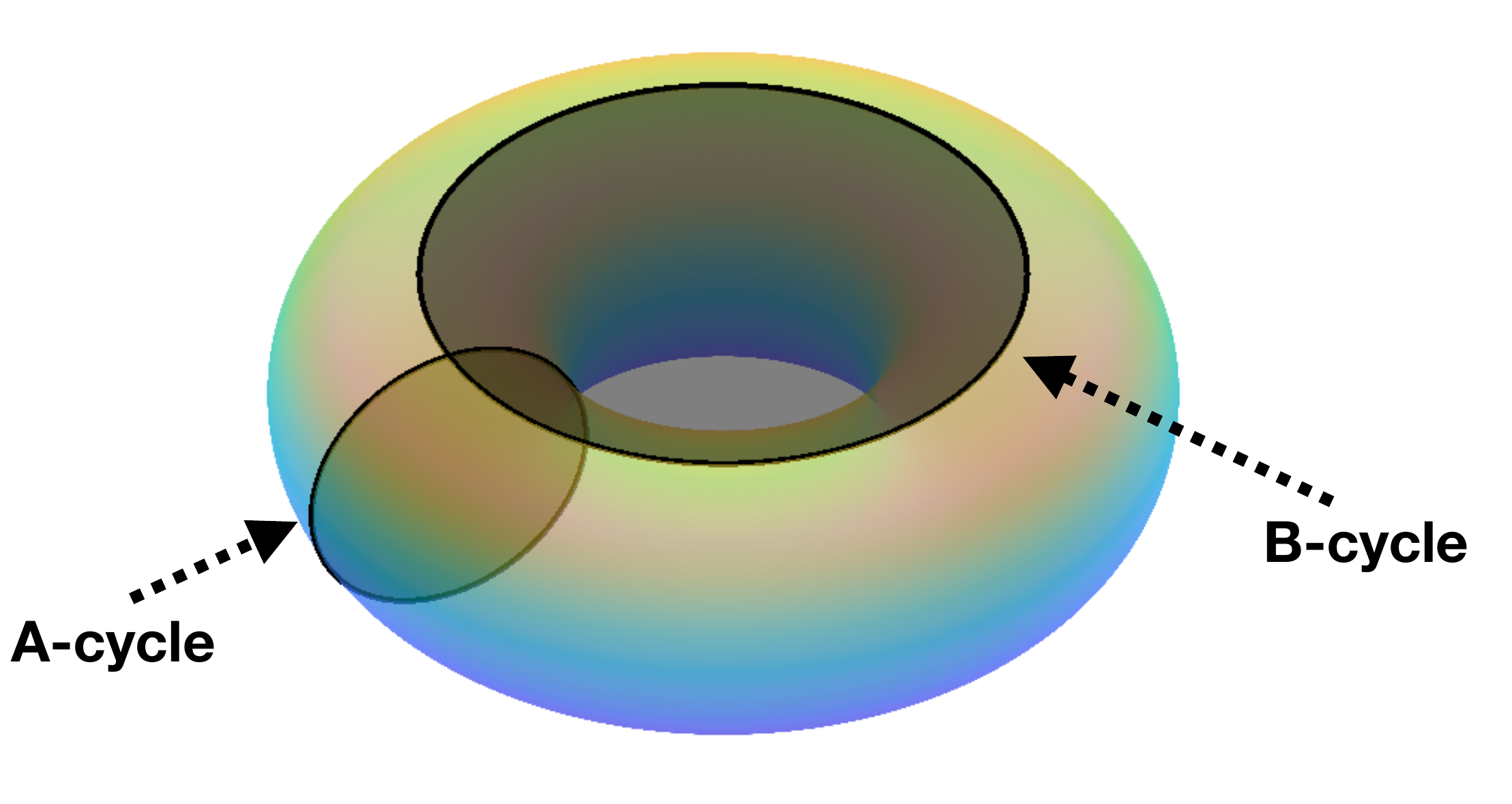}
    \caption{A-cycle and B-cycle, with spanning surfaces,  for a genus 1 torus.}
    \label{fig:tor}
\end{figure}

 Since $-\bn \times \bn \times \bB$ is a closed vector field on $\Gamma$, its integrals over
 cycles on $\Gamma$ depend only on the homology class of the cycle. It
 follows from the boundary condition, and Stokes theorem that
 \begin{equation}
   \int_{B_j} \bB^{-} \cdot d\ell= \int_{B_j} (\bB^{+} + \bB^{\In}) \cdot d\ell
   = \int_{S_{B_j}} \nabla \times (\bB^{+} + \bB^{\In}) \cdot \bn \,  dS = 0.
 \end{equation}
 On the other hand the fluxes
 \begin{equation}
   a_j=\int_{A_j}\bB^- \cdot d\ell = \int_{S_{A_j}}\nabla \times \bB^{-} \cdot \bn  \, dS=
   \int_{S_{A_j}}\bJ^{-} \cdot \bn \, dS,\text{ for }j=1,\dots, g,
 \end{equation}
 are not determined {\em a priori}, and in fact, constitute additional data that must
 be specified to get a uniqueness theorem.

 \section{The Uniqueness Theorem \label{sec:uniqueness}}
 Before proceeding to the uniqueness theorem we consider a minimal
 parametrization for solutions to this system of equations. The bounded domain
 $\Omega$ always has a trivial 2-dimensional cohomology group, and therefore
 $\bB^-=\nabla \times \bA^{-},$ with $\bA^-$ a solution of
 \begin{equation}\label{eqn9}
   -\Delta \bA^-=-\frac{1}{\lambda_L^2}\bA^-.
 \end{equation}
 We then define $\bJ^-=\nabla (\nabla \cdot \bA^-)-\frac{1}{\lambda_L^2}\bA^-.$ With these
 choices, $\bB^-$ and $\bJ^-$ satisfy the equations~\eqref{eqn1} and
 \eqref{eqn2} . The set of vector fields satisfying~\eqref{eqn9} depend on 3 functions
 defined on $\Gamma,$ for example $-\bn \times (\bn \times \bA^{-})$ and
 $-\nabla \cdot \bA^-.$ Using the gauge freedom
 $\bA^-\rightarrow \bA^-+\nabla \phi^-,$ where
 $\lambda_L^2\Delta\phi^--\phi^-=0,$ we can reduce this to two functions on the
 boundary.
 
There are $g$ outgoing harmonic vector fields defined
 in $\Omega^c,$ denoted by 
 $\{\bB^H_1,\dots,\bB^H_g\},$ which satisfy $\nabla \times \bB^{H}_{j} = \nabla \cdot \bB^{H}_j = 0$ for $\bx \in \Omega^{c}$, and
 $\bB^{H}_j \cdot \bn =0,$ for $\bx \in \Gamma$, $j=1,2,\ldots g.$ An arbitrary vector field
 $\bB^+,$ satisfying $\nabla \times \bB^{+}=\nabla \cdot \bB^+=0$ has a unique representation as
 \begin{equation}
   \bB^+=\nabla f^++\sum_{j=1}^gh_j\bB^H_j,
 \end{equation}
 where $\Delta f^+=0.$ Thus outgoing solutions to magneto-static problems depend on 1-function of mean 0 on
 $\Gamma$ ($\pa_{\bn}f^+$), and the $g$ real numbers $\{h_1,\dots,h_g\}.$

 We  now state and prove a uniqueness theorem for this problem.
 \begin{theorem}
   Suppose that $(\bB^-,\bJ^-),$ and $\bB^+$ satisfy~\eqref{eqn1}
   and~\eqref{eqn3} respectively and the homogeneous boundary conditions
   \begin{equation}
        \bB^{-} = \bB^{+} \quad \text{on } \Gamma \, ,
   \end{equation}
   along with the conditions
   \begin{equation}
     \int_{A_j}\bB^- \cdot d\ell=0 \, \, \textrm{for } j=1,2,\ldots , g.
   \end{equation}
   If, in addition, $\bB^+$ is an outgoing solution, then $(\bB^-,\bJ^-)\equiv
   (0,0)$ in $\Omega$ and $\bB^+\equiv 0,$ in $\Omega^{c}$.
 \end{theorem}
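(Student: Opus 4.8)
The plan is to produce a single global energy identity that exhibits a sum of manifestly nonnegative quantities as zero, thereby forcing every field to vanish. The two ingredients are an interior curl-curl identity and a transfer of the resulting boundary flux into the exterior Dirichlet energy, mediated by the continuity of $\bB$ across $\Gamma$ and by the second London equation.

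First I would establish the interior identity. Since $\nabla\times\bB^-=\bJ^-$ and $\nabla\times\bJ^-=-\lambda_L^{-2}\bB^-$, the divergence identity $\nabla\cdot(\bB^-\times\bJ^-)=\bJ^-\cdot(\nabla\times\bB^-)-\bB^-\cdot(\nabla\times\bJ^-)=|\bJ^-|^2+\lambda_L^{-2}|\bB^-|^2$, integrated over $\Omega$ with $\bn$ the outward normal, gives
\begin{equation}
  \int_\Omega\left(|\bJ^-|^2+\frac{1}{\lambda_L^2}|\bB^-|^2\right)dV=\int_\Gamma(\bB^-\times\bJ^-)\cdot\bn\,dS=\int_\Gamma\bJ^-\cdot(\bn\times\bB^-)\,dS .
\end{equation}
Next I would use the hypotheses on $\bB^+$. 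Because $\bB^+$ is closed in $\Omega^c$ and the $A$-cycles (those that bound in $\Omega$) generate $H_1(\Omega^c)$, the conditions $\int_{A_j}\bB^-\cdot d\ell=\int_{A_j}\bB^+\cdot d\ell=0$ make $\bB^+$ exact, so $\bB^+=\nabla\phi$ with $\phi$ single valued and harmonic in $\Omega^c$. Moreover $\int_\Gamma\bB^+\cdot\bn\,dS=\int_\Gamma\bB^-\cdot\bn\,dS=\int_\Omega\nabla\cdot\bB^-\,dV=0$, so $\phi$ carries no monopole and decays like a dipole; with the radiation condition this kills the sphere-at-infinity term in Green's identity over $\Omega^c\cap B_R$, leaving
\begin{equation}
  \int_{\Omega^c}|\bB^+|^2\,dV=\int_{\Omega^c}|\nabla\phi|^2\,dV=-\int_\Gamma\phi\,\partial_\bn\phi\,dS .
\end{equation}

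Then I would match the two boundary integrals. On $\Gamma$ the continuity $\bB^-=\bB^+=\nabla\phi$ gives $\bn\times\bB^-=\bn\times\nabla_\Gamma\phi$, so that, integrating by parts on the closed surface $\Gamma$,
\[
  \int_\Gamma\bJ^-\cdot(\bn\times\bB^-)\,dS=\int_\Gamma(\bJ^-\times\bn)\cdot\nabla_\Gamma\phi\,dS=-\int_\Gamma\phi\,\nabla_\Gamma\cdot(\bJ^-\times\bn)\,dS .
\]
The surface identity $\nabla_\Gamma\cdot(\bJ^-\times\bn)=(\nabla\times\bJ^-)\cdot\bn=-\lambda_L^{-2}\bB^-\cdot\bn=-\lambda_L^{-2}\partial_\bn\phi$, in which the \emph{second} London equation supplies the normal component, then yields $\int_\Gamma\bJ^-\cdot(\bn\times\bB^-)\,dS=\lambda_L^{-2}\int_\Gamma\phi\,\partial_\bn\phi\,dS=-\lambda_L^{-2}\int_{\Omega^c}|\bB^+|^2\,dV$. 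Substituting into the interior identity gives the master relation
\begin{equation}
  \int_\Omega\left(|\bJ^-|^2+\frac{1}{\lambda_L^2}|\bB^-|^2\right)dV+\frac{1}{\lambda_L^2}\int_{\Omega^c}|\bB^+|^2\,dV=0 ,
\end{equation}
and since every term is nonnegative, all vanish, whence $\bB^-\equiv\bJ^-\equiv0$ in $\Omega$ and $\bB^+=\nabla\phi\equiv0$ in $\Omega^c$.

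I expect the main obstacle to be the careful bookkeeping of signs and orientations, and in particular the surface identity $\nabla_\Gamma\cdot(\bJ^-\times\bn)=(\nabla\times\bJ^-)\cdot\bn$, whose sign is precisely what turns the transferred boundary term into one of the \emph{opposite} sign to the interior energy (so that the master relation is a sum, not a difference, of nonnegative terms). A subsidiary point deserving care is the topological claim that it is the vanishing of the $A$-periods, rather than the $B$-periods, that renders $\bB^+$ exact in the unbounded region $\Omega^c$, together with the justification that $\phi$ may be taken single valued with monopole-free decay so that the contribution at infinity is genuinely absent.
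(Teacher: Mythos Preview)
Your proof is correct and follows essentially the same route as the paper's: an interior energy identity obtained from the divergence of $\bB^-\times\bJ^-$, the representation $\bB^+=\nabla\phi$ supplied by the vanishing $A$-periods, the exterior Dirichlet identity, and a surface integration by parts (using the second London equation) to match the two boundary terms and arrive at the same master relation. If anything, you are slightly more careful than the paper in explaining why the $A$-periods (rather than the $B$-periods) are the relevant ones for exactness in $\Omega^c$, and in noting that the vanishing total flux of $\bB^+$ through $\Gamma$ removes the monopole so that the contribution at infinity genuinely disappears.
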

 \begin{proof}
   We use integration parts. Using equation~\eqref{eqn1} and Stokes theorem we
   see that 
   \begin{equation}
     \begin{split}
     \int_{\Omega}\left[\frac{1}{\lambda_L^2}| \bB^{-} |^2+|\bJ^{-}|^2 \right] dV&=
     \int_{\Gamma} \bn \times \bB^{-} \cdot \bJ^{-} dS.
       \end{split}
   \end{equation}
   The hypotheses of the theorem imply that $\bB^+$ has no projection onto
   the harmonic vector fields $\bB^H_{j}$, $j=1,2,\ldots g$, and thus there exists a harmonic function $f$ defined 
 in $\Omega^c$ such that $\bB^{+} = \nabla f.$ Therefore, as
   $\bB^+$ is outgoing,
   \begin{equation}
     \int_{\Omega^c}|\bB^{+}|^2 dV= \int_{\Omega^c} |\nabla f|^2 dV = -\int_{\Gamma} f \bB^{+} \cdot \bn dS \, ,
   \end{equation}
   Note the extra negative sign since the normal is pointing into $\Omega^c$.
   Because $\bB^{-} = \bB^{+}$ on the boundary, 
   we see that
   \begin{equation}
     \begin{split}
     \int_{\Gamma}  \bn \times \bB^{-} \cdot \bJ^{-} &= 
     \int_{\Gamma} \bn \times \nabla f \cdot \bJ^{-} dS\\
       &=\int_{\Gamma}[-f \nabla \times \bJ^- \cdot \bn + \nabla \times (f \bJ^{-}) \cdot \bn ]\\
       &=\frac{1}{\lambda_L^2}\int_{\Gamma} f\bB^+ \cdot \bn.
     \end{split}
   \end{equation}
   Combining these various computations we see that
   \begin{equation}
    \int_{\Omega}\left[\frac{1}{\lambda_L^2}| \bB^{-} |^2+|\bJ^{-}|^2 \right] dV
     +\frac{1}{\lambda_L^2}\int_{\Omega^c}|\bB^{+}|^2 dV=0,
   \end{equation}
   from which the conclusion is immediate.
 \end{proof}
 The theorem shows that $\bB^{\In}(\bx)$ for $\bx$  on  $\Gamma$ along
 with the $g$ real parameters $\{a_1,\dots,a_g\},$ uniquely determine
 a solution to the ``scattering'' problem for a superconductor.  Note
 that this theorem leaves open the possibility that there is
 $g$-parameter family of purely outgoing solutions.  In the next
 section we will see that such solutions do in fact exist. In the
 physically important toroidal case, this theorem implies that the
 static current distribution in a superconducting torus is determined
 by its flux across $S_A.$

 This uniqueness result, in the simply connected case, appears in~\cite{Cook2}.

\section{Layer potentials \label{sec:layerpot}}
In this section we describe some elementary properties of scalar and vector layer potentials for the Helmholtz equation. These results are classical and can be found in~\cite{ColtonKress} for example.
Suppose that $g_{k}(\bx,\by)$ is the Green's function for the scalar Helmholtz equation with wave number $k$ given by
\begin{equation}
g_{k}(\bx,\by) = \frac{e^{ik|\bx-\by|}}{4\pi |\bx-\by|}
\end{equation}
Here $k$ is any complex number.
Let 
$\cS_{k} [\sigma]$, $\cD_{k}[\sigma]$ denote the Helmholtz single and double layer potentials given by
\begin{equation}
\cS_{k}[\sigma](\bx) = \int_{\Gamma} g_{k}(\bx,\by) \sigma(\by) \, dS  \, , \quad  
\cD_{k}[\sigma](\bx) = \int_{\Gamma} \left( \bn(\by) \cdot \nabla_{\by} g_{k}(\bx,\by)  \right)\sigma(\by) \, dS \, .
\end{equation}
Let $\cS_{k}'[\sigma]$ and $\cD_{k}'[\sigma]$ denote the principal value and finite part of the Neumann data corresponding to the single and double layer potentials given by
\begin{equation}
\begin{aligned}
\cS_{k}'[\sigma](\bx) &={\rm p.v.} \left(\bn(\bx) \cdot \nabla_{\bx} \left( \int_{\Gamma} g_{k}(\bx,\by) \sigma(\by) \, dS  \right)  \right)  \, , \\
\cD_{k}'[\sigma](\bx) &= {\rm f.p.}\left( \bn(\bx) \cdot \nabla_{\bx} \left( \int_{\Gamma} \left( \bn(\by) \cdot \nabla_{\by} g_{k}(\bx,\by)  \right)\sigma(\by) \, dS  \right) \right) \, .
\end{aligned}
\end{equation}
$\cS_{k}''$ denotes the finite part of the second normal derivative of $\cS_{k}$ restricted to $\Gamma$ given by
\begin{equation}
\cS_{k}''[\sigma](\bx) ={\rm f.p.} \left(\bn(\bx) \cdot \nabla_{\bx} \nabla_{\bx }\left( \int_{\Gamma} g_{k}(\bx,\by) \sigma(\by) \, dS  \right) \cdot \bn(\bx) \,  \right) \, , \\
\end{equation}
and let $H(\bx)$ denote the mean curvature for $\bx \in \Gamma$. The abbreviation ``p.v.'' denotes a principal value integral and the abbreviation ``f.p.'' denotes the finite part of the integral.

The layer potentials satisfy the following jump relations and Calderon identities.
Suppose that $\bx_{0} \in \Gamma$, then 
\begin{equation}
\label{eq:identites}
\begin{aligned}
\lim_{\substack{\bx \to  \bx_{0} \\ \bx \in \Omega}} \nabla_{\bx} \cS_{k} [\sigma] &= -\frac{\sigma(\bx_{0}) \bn(\bx_{0})}{2} + {\rm p.v.} \nabla_{\bx} \cS_{k}[\sigma](\bx_{0}) \, ,\\
\lim_{\substack{\bx \to \bx_{0} \\ \bx \in \Omega}} \nabla_{\bx} \times \cS_{k} [\bv] &= \frac{\bn(\bx_{0}) \times \bv(\bx_{0})}{2} + {\rm p.v.} \left(\nabla_{\bx} \times \cS_{k}[\bv](\bx_{0}) \right) \, ,\\
\lim_{\substack{\bx \to \bx_{0} \\ \bx \in \Omega^c}} \bn(\bx_{0}) \cdot \nabla_{\bx} \cS_{k} [\sigma] &= \frac{\sigma(\bx_{0})}{2} + \cS_{k}'[\sigma](\bx_{0}) \, , \\
\lim_{\substack{\bx \to \bx_{0} \\ \bx \in \Omega^{c}}} \nabla_{\bx} \times \cS_{k} [\bv] &= -\frac{\bn(\bx_{0}) \times \bv(\bx_{0})}{2} + {\rm p.v.} \left(\nabla_{\bx} \times \cS_{k}[\bv](\bx_{0}) \right) \, ,\\
\cS_{0} \Delta_{\Gamma} \cS_{0}[\sigma](\bx_{0}) &= -\frac{\sigma(\bx_{0})}{4} + \left( \cD_{0}^2 - \cS_{0} \left( \cS_{0}'' + \cD_{0}' - 2H \cS_{0}' \right)  \right)[\sigma](\bx_{0}) \, , \\
\Delta_{\Gamma} \cS_{0}^2[\sigma](\bx_{0}) &= -\frac{\sigma(\bx_{0})}{4} + \left( (\cS_{0}')^2 - \left( \cS_{0}'' + \cD_{0}' - 2H \cS_{0}' \right)  \cS_{0} \right)[\sigma](\bx_{0}) \, .
\end{aligned}
\end{equation}
In these equations, $\sigma$ is a scalar function, and $\bv$ a vector field defined on $\Gamma.$

\begin{remark}
Even though $\cS''_{k}$ and $\cD'_{k}$ are finite part integrals, in all integral equations, the two terms always appear as a sum $\cS_{k}'' + \cD_{k}'$ which is in fact a compact operator of order $-1$.
\end{remark}

 \section{The Debye Source Representation}
 To define the Debye source representations for $(\bB^-,\bJ^-)$ and $\bB^+$
 it is useful to first rescale the fields inside of $\Omega$ by setting
 $$\tbB^-=\frac{1}{\lambda_L}\bB^-.$$
 The rescaled fields, then satisfy the more symmetric system of equations
 \begin{equation}
   \nabla \times \tbB^-=\frac{1}{\lambda_L}\bJ^- \text{ and } \nabla \times \bJ^- =-\frac{1}{\lambda_L}\tbB^-.
 \end{equation}
 The rescaled fields have the following Debye source representation
\begin{equation}\label{eqn18}
   \begin{split}
     \tbB^-=-\frac{1}{\lambda_L}\btheta^- + \nabla \Psi^-+\nabla \times \bA^- \,& , \quad
     \bJ^-=-\frac{1}{\lambda_L}\bA^--\nabla\phi^--\nabla \times \btheta^- \, , \text{ and}\\
     \bB^+=\nabla \times \bA^+&+\nabla \Psi^+ \, ,
   \end{split}
\end{equation}
where $\Psi^{\pm},\phi^-$ are scalar functions, and $\bA^{\pm},\btheta^-$ are vector fields.
In order for these fields to satisfy the relevant PDEs it is necessary that
\begin{equation}\label{eqn19}
  \nabla \cdot \bA^-=-\frac{1}{\lambda_L}\phi^-,\quad \nabla \cdot \btheta^-=\frac{1}{\lambda_L}\Psi^- \, ,
\end{equation}
and $\nabla \cdot \bA^+=\Delta \Psi^+=0.$ These latter conditions are automatic for
\begin{equation}
\label{eq:bpp}
  \Psi^+=\cS_{0}[q^+] \text{ and }\bA^+= \cS_{0}[\bl^{+}_{H}] \, ,
\end{equation}
where $\cS_{0}$ is the single layer potential defined in~\cref{sec:layerpot}, and $\bl^+_H$ is a harmonic vector field on $\Gamma$.

Let $k=i/\lambda_{L}$. For the interior fields we set
\begin{equation}\label{eqn22}
    \bA^-= \cS_{k}[\bl^{-}] \, , \quad \btheta^{-} = \cS_{k}[\bm^{-}] \, , \quad \phi^{-} =\cS_{k}[r^{-}] \, , \quad \text{and} \, ,\quad
    \Psi^{-} = \cS_{k}[q^{-}] \, ,
\end{equation}
where $\bl^{-}$ and $\bm^{-}$ are vector fields, which are tangent to $\Gamma.$
For the equations in~\eqref{eqn19} to hold it is only necessary that
\begin{equation}
\nabla_{\Gamma} \cdot \bl^-=-\frac{1}{\lambda_L}r^-\text{ and }
\nabla_{\Gamma} \cdot \bm^- =\frac{1}{\lambda_L}q^- \, ,
\end{equation}
where $\nabla_{\Gamma} \cdot $ is the surface divergence operator.
These relations force $q^{-}$ and $r^{-}$ to be functions of mean zero on $\Gamma$. As we show below, this is not generally true of $q^{+}$.
With this in mind we set
\begin{equation}
 \begin{split}
  \bm^- &=\frac{1}{\lambda_L}\nabla_{\Gamma} \Delta_{\Gamma}^{-1}q^- \, ,\\
  \bl^-&=-\frac{1}{\lambda_L}\left[\nabla_{\Gamma} \Delta_{\Gamma}^{-1}r^- +\bl_H^- \right] \, .
  \end{split}
\end{equation}
Here $\nabla_{\Gamma}$ is the surface gradient operator, which is the adjoint of the
surface divergence operator on the space of $L^2$ functions with respect to the induced 
metric on the surface; $\Delta_{\Gamma}$ is the Laplace Beltrami operator on $\Gamma$; 
and $\bl_H^-$ is a harmonic vector field on $\Gamma.$ 
The inverse of $\Delta_{\Gamma}$ should be interpreted as inverse on functions of mean zero.
This is essentially what is
done in~\cite{EpGr2}, and therefore we can follow the computations there  to work out
the form of the boundary integral equations.

Using the jump relations and identities for layer potentials discussed in~\cref{sec:layerpot}, the limiting values of $\tbB^{-}$, $\bB^{+}$ and $\bJ^{-}$ for $\bx \in \Gamma$ are given by
\begin{equation}
\begin{aligned}
\tbB^{-} &= \frac{q^{-} \bn + \bn \times \bl^{-}}{2}  -\frac{1}{\lambda_L} \cS_{k} [\bm^{-}]  + {\rm p.v.} \left( \nabla \cS_{k}[q^{-}] + \nabla \times \cS_{k}[\bl^{-}] \right) \, ,\\ 
\bB^{+} &= -\frac{q^{+} \bn + \bn \times \bl^{+}_{H} }{2} + {\rm p.v.} \left( \nabla \cS_{0}[q^{+}] + \nabla \times \cS_{0} [\bl_{H}^{+}] \right) \, , \\
\bJ^{-} &= -\frac{r^{-} \bn + \bn \times \bm^{-}}{2} -\frac{1}{\lambda_{L}} \cS_{k}[\bl^{-}] - 
{\rm p.v.} \left( \nabla \cS_{k}[r^{-}] + \nabla \times \cS_{k}[\bm^{-}] \right)  \, .
\end{aligned}
\end{equation}

For the boundary conditions we start with
\begin{equation}\label{eqn25}
  \begin{split}
    -\lambda_L \bn \times (\bn \times \tbB^-)&=-\bn \times (\bn \times [\bB^++\bB^{\In}])\quad \text{on } \Gamma \, ,\\
    \lambda_L\tbB^- \cdot \bn &=[\bB^++\bB^{\In}] \cdot \bn \quad \text{on }\Gamma \, .
    \end{split}
\end{equation}
Following~\cite{EpGr2} we apply $\cS_0 \nabla_{\Gamma} \cdot$ to both sides of the first equation
in~\eqref{eqn25} to obtain the following equation
\begin{equation}\label{eqn26}
\begin{aligned}
\frac{-\lambda_{L} q^{-} + q^{+}}{4} + \lambda_{L} \cS_{0} \Delta_{\Gamma} \left( \cS_{k} - \cS_{0}\right) [q^{-}] + 
\left( \cD_{0}^2  -\cS_{0} \left( \cS_{0}'' + \cD_{0}' - 2H\cS_{0}\right) \right) [\lambda_{L} q^{-} - q^{+}] &+ \\
 \cS_{0} \left[\nabla_{\Gamma} \cdot \bn \times \bn \cdot \left( \cS_{k}[\bm^{-}]- \lambda_{L} \nabla \times \cS_{k}[\bl^{-}] + \nabla \times \cS_{0}[\bl_{H}^{+}] \right) \right] = -\cS_{0}[\nabla_{\Gamma} \cdot \bn \times \bn \times& \bB^{\In} ]  \, . \\
\end{aligned}
\end{equation}
This follows from the fact the limiting values of $\cS_{0} [\nabla_{\Gamma} \cdot ]$ applied to the tangential components of the interior and exterior magnetic fields are given by
\begin{equation}
\begin{aligned}
-\cS_{0}[\nabla_{\Gamma} \cdot \bn \times \bn \times \tbB^{-}] &= \cS_{0} \Delta_{\Gamma} \cS_{k} [q^{-}] + \cS_{0} \left[ \nabla_{\Gamma} \cdot \bn \times \bn \times \left( \frac{1}{\lambda_L}\cS_{k}[\bm^{-}] - \nabla \times \cS_{k}[\bl^{-}] \right) \right] \\
&= -\frac{q^{-}}{4} + \cS_{0} \Delta_{\Gamma} \left(\cS_{k}- \cS_{0} \right) [q^{-}] + 
\left( \cD_{0}^2  -\cS_{0} \left( \cS_{0}'' + \cD_{0}' - 2H\cS_{0}\right) \right) [q^{-}] + \\
& \quad \quad \quad \cS_{0} \left[ \nabla_{\Gamma} \cdot \bn \times \bn \times \left( \frac{1}{\lambda_L}\cS_{k}[\bm^{-}] - \nabla \times \cS_{k}[\bl^{-}] \right) \right]  \\
-\cS_{0}[\nabla_{\Gamma} \cdot \bn \times \bn \times \bB^{+}] &= \cS_{0} \Delta_{\Gamma} \cS_{0} [q^{+}] - \cS_{0} \left[ \nabla_{\Gamma} \cdot \bn \times \bn \times \nabla \times \cS_{0}[\bl_{H}^{+}] \right] \\
&= -\frac{q^{+}}{4} + 
\left( \cD_{0}^2  -\cS_{0} \left( \cS_{0}'' + \cD_{0}' - 2H\cS_{0}\right) \right) [q^{+}] - \\
 &\quad \quad \quad \cS_{0} \left[ \nabla_{\Gamma} \cdot \bn \times \bn \times \nabla \times \cS_{0}[\bl_{H}^{+}] \right]  \, .
\end{aligned}
\end{equation}
The second equation in~\cref{eqn25} takes the form
\begin{equation}\label{eqn27}
 \frac{\lambda_{L} q^{-} + q^{+}}{2} + \lambda_{L} \cS_{k}'[q^{+}] - \cS_{0}'[q^{+}] +  \bn \cdot \left( - \cS_{k}[\bm^{-}]+ \lambda_{L} \nabla \times \cS_{k}[\bl^{-}] - \nabla \times \cS_{0}[\bl_{H}^{+}] \right) = \bn\cdot \bB^{\In}  \\
\end{equation}
The second condition in~\cref{eqn25} also implies that
\begin{equation}
\int_{\Gamma} \left(\lambda_{L} \tbB^{-} - \bB^{+} \right) \cdot \bn \, dS = \int_{\Gamma} \bB^{\In} \cdot \bn \, dS  \, .
\end{equation}
In terms of the Debye sources we see that
\begin{equation}
\begin{aligned}
\int_{\Gamma} \lambda_{L} \tbB^{-} \cdot \bn \,  dS &= \int_{\Gamma} \left(-\btheta^{-} + \lambda_{L} \nabla \Psi^{-} + \lambda_{L}\nabla \times \bA^- \right) \cdot \bn \,  dS \\
 &= \int_{\Omega}\left( -\nabla \cdot \btheta^{-} + \lambda_{L} \Delta \Psi^{-} \right) \, dV = 0 \, , \quad \text{and} \\
\int_{\Gamma} -\bB^{+} \cdot \bn \, dS &= -\int_{\Gamma} \left(\nabla \times \bA^{+} + \nabla \Psi^{+} \right) \cdot \bn \, dS = 
\int_{\Gamma} q^{+} \, dS \, .
\end{aligned} 
\end{equation}
The last equality follows by using the jump relation for the integral defining $\nabla \Psi^{+}$, and the fact that it is a closed vector field in $\Gamma^{c}$. Thus, we see that
\begin{equation}
\int_{\Gamma} q^{+} dS = \int_{\Gamma} \bB^{\In} \cdot \bn \, dS \,.
\end{equation}

  To obtain the final equation we use instead the condition that $\bJ^{-} \cdot \bn =0.$
  Since $\lambda_L \nabla \times \tbB^{-}=\bJ^-,$ this will imply that
  $\nabla_{\Gamma} \cdot \bn \times \tbB^{-} = \bJ^{-} \cdot \bn / \lambda_{L} =0$. In the Debye representation the condition $\bJ^- \cdot \bn=0$ takes the form:
    \begin{equation}\label{eqn28}
-\frac{r^{-}}{2} -\cS_{k}'[r^{-}] - \bn \cdot \left( \cS_{k}[\bm^{-}] + \nabla \times \cS_{k} [\bl^{-}] \right) =0 \, , \\
    \end{equation}
    It is clear that equations~\eqref{eqn26},~\eqref{eqn27}, and~\eqref{eqn28} are a Fredholm system of second kind for $(q^-,q^+,r^-).$ These together with the PDEs will
    enforce the conditions that the normal components of total $\bB$-fields
    are continuous across $\Gamma,$ as are the topologically trivial
    components of the Hodge decomposition of the tangential components. This
    still leaves the harmonic components, which are in turn related to the two
    harmonic vector fields $(\bl_H^+,\bl_H^-).$

    The harmonic vector field $\bl_H^-$ is, {\em a priori}, arbitrary, belonging to a
    $2g$-dimensional space. We first need to impose the conditions
    \begin{equation}\label{eqn29}
      \int_{B_j}\tbB^{-} \cdot d\ell=0\text{ for }j=1,\dots,g,
    \end{equation}
    since $\bB$ is continuous across the boundary and $\bB^{+}$ satisfies 
    $\nabla \times \bB^{+} = 0$ in $\Omega^c.$

  The harmonic vector field
    $\bl_H^+$ is selected from a $g-$dimensional subspace 
    $V_+\subset \textrm{ker} \nabla_{\Gamma} \cdot ,$ complementary to common null-space of the
    linear functionals defined by the flux of the associated vector fields $\nabla \times \bA^{+}$ over the A-cycles.
    Thus if $\bl^+_H\in V_+$ and
    \begin{equation}
    \label{eqn:hvec1}
      \int_{B_j} \left(\nabla \times \bA^{+} \right) \cdot d\ell =0\text{ for }=1,\dots,g,
      \text{ then }\bl^+_H=0.
    \end{equation}
   Here $\nabla \times \bA^{+}$ for $\bx$ on $\Gamma$ is defined as the limit from $\Omega^{c}$. These integrals are all zero if the limit is taken in~\cref{eq:bpp} is taken from $\Omega$; hence, using the jump relations for $\nabla \times \bA^{+}$, we see that
   \begin{equation}
   \int_{A_{j}} \nabla \times \bA^{+} \cdot d\ell = -\int_{A_{j}} \bn \times \bl^{+}_{H} \cdot d\ell  \, .
   \end{equation}
Hence for $V_{+}$ we can take the $g-$dimensional subspace of $\bl^{+}_{H} \in \cH^{1}(\Gamma)$ such that integrals of $\bn \times \bl^{+}_{H} \cdot d\ell$ over the $B-$cycles vanish. 

 With this understood, we augment the Fredholm system above with
 the following $4g$ conditions
    \begin{equation}\label{eqn30}
    \begin{aligned}
      \int_{A_j}\lambda_L\tbB^- \cdot d\ell=\int_{A_j}[\bB^++\bB^{\In}] \cdot d\ell=a_j \, ,\\
      \int_{B_{j}}\tbB^{-} \cdot d\ell  = \int_{B_{j}} \bn \times \bl_{H}^{+} \cdot d\ell = 0 \, ,
       \end{aligned}
    \end{equation}
    for $j=1,2,\ldots g$.
    These additional conditions ensure that the topologically non-trivial parts
    of the Hodge decompositions of
    $-\bn \times \bn \times [\bB^++\bB^{\In}]$ and
    $-\bn \times \bn \times \lambda_L\tbB^-$ agree. Taken together, these equations imply
    the continuity of the $\bB$-fields across $\Gamma.$
    
 \begin{remark}
 \label{rem:rep}
There are other choices that can be made in the definition of the sources $\bm^{-}, \bl^{-},$ in terms of the scalar
sources and harmonic vector fields. Indeed, we can let   
\begin{equation}
 \begin{split}
  \bm^- &=\frac{1}{\lambda_L}\left[ \nabla_{\Gamma} \Delta_{\Gamma}^{-1}q^-
  +\sigma_{m} \bn \times \nabla_{\Gamma} \Delta_{\Gamma}^{-1} \left(q^{+} - \frac{1}{|\Gamma|} \int_{\Gamma} q^{+} \right) + 
  \bm_{H}^{-}\right]\\
  \bl^-&=-\frac{1}{\lambda_L}\left[\nabla_{\Gamma} \Delta_{\Gamma}^{-1}r^- 
  + \sigma_{\ell} \bn \times \nabla_{\Gamma} \Delta_{\Gamma}^{-1} \left(q^{+} - \frac{1}{|\Gamma|} \int_{\Gamma} q^{+} \right) + 
  \bl_H^- \right] \, , 
  \end{split}
\end{equation}
with $\sigma_{\ell},\sigma_{m}$ arbitrary constants. The harmonic vector fields $\bm_{H}^{-}$ and $\bl_{H}^{-}$ need to satisfy the condition
\begin{equation}
\int_{\Gamma} \bn \times \bl_{H}^{-} \cdot \bm_{H}^{-} \, dS= 0\, .
\end{equation}
This means that either we fix one of the sources to vanish and then the other is an arbitrary harmonic vector field, or we can select a Lagrangian subspace $\cL$ which is a subset of the harmonic vector fields and both $\bm_{H}^{-}, \bl_{H}^{-} \in \cL$. A subspace, $\cL$, is Lagrangian if its dimension is half the dimension of space of harmonic vector fields and any pair $\bl_1, \bl_2 \in \cL$ satisfy
\begin{equation}
\int_{\Gamma} \bn \times \bl_1 \cdot \bl_2 \, dS= 0\, .
\end{equation}
In~\cref{subsec:sphere}, we briefly explore the effect that $\sigma_{m},\sigma_{\ell}$ have on the conditioning of the resultant system of integral equations.
 \end{remark}   
    
    \section{Injectivity of the Debye Source Representation}
    We have shown that the Debye source representation results in a system of
    Fredholm equations of second kind, but we still need to show that this
    representation has a trivial nullspace. That is we need to show that if we
    are given data $(q^-,r^-,\bl_H^-,q^+,\bl_H^+)$ so that the
    equations~\eqref{eqn26}--\eqref{eqn30} are all satisfied with right hand
    sides equal to zero,  then the data itself must be zero. In
    this context we will assume that the incoming data $\bB^{\In}$ satisfies
    \begin{equation}
      \int_{A_j}\bB^{\In} \cdot d\ell=0\text{ for }j=1,\dots,g.
    \end{equation}
    This can always be arranged by adding an outgoing harmonic vector field to $\bB^{\In}.$

    It follows from these assumptions that for $\bx$ on $\Gamma$, $\bB^{\In} = 0,$
    therefore the fields
    $\bB^{\pm}$ satisfy the homogeneous conditions
    \begin{equation}
      \begin{split}
        \bB^{+} = \bB^{-} &\text{ on } \Gamma\\
        \int_{A_j} \bB^{-} \cdot d\ell =0&\text{ for }j=1,\dots,g.
        \end{split}
    \end{equation}
    Since $\bB^+$ is outgoing, we proved above that these conditions imply that
    $(\bJ^{-},\bB^-)=(0,0)$ in $\Omega,$ and $\bB^+=0$ in $\Omega^c.$

    To show that the Debye representation is injective, we first consider $\bB^+=\nabla \times \bA^++ \nabla \Psi^{+}$ in $\Omega^c.$ The fact that
    this field vanishes shows that
    \begin{equation}
      \int_{A_j}\nabla \times \bA^+ \cdot d\ell=0,\text{ for }j=1,\dots,g.
    \end{equation}
    Recall that
    \begin{equation}
      \bA^+=\int_{\Gamma}g_0(\bx,\by)\bl_H^+(\by) dS,
    \end{equation}
    where $\bl_H^+\in V_+.$ The subspace $V_+
    \subset \cH^1(\Gamma)$ is defined so that the conditions
    satisfied by $ \nabla \times \bA^+$ imply that $\bl_H^+\equiv0,$
    see~\eqref{eqn:hvec1}. Thus we see
    that $\nabla \Psi^{+}=0$ for $x\in\Omega^c,$ where
    \begin{equation}
     \Psi^+(\bx)=\int_{\Gamma}g_0(\bx,\by)q^+(\by)dS.
    \end{equation}
    Thus $\Psi^+(\bx)$ is constant, for $\bx\in\Omega^c,$ and tends to zero at infinity,
    it is therefore identically zero in $\Omega^c.$ The function $\Psi^+(\bx)$ is also
    harmonic in $\Omega$ and continuous across $\Gamma,$ so therefore
    identically zero in $\Omega$ as well. The jump formula for $\pa_{\bn}\Psi^+$
    across $\Gamma$ shows that $q^+\equiv 0.$ This leaves the sources
    $(q^-,r^-,\bl_H^-).$

    The fields $(\bJ^-,\bB^-)$ vanish in $\Omega;$ we let
    $(\bJ^-_1,\bB^-_1)$ denote the fields in $\Omega^c$ defined by these
    sources via~\eqref{eqn18} and~\eqref{eqn22}.  The jump relations for the
    Debye representation,
    \begin{equation}
      \begin{split}
       -\bn\times \bn \times (\bJ_{1}^- - \bJ^-)&=\bn \times \bm^- \, ,\\
       -\bn \times \bn \times (\bB^-_1-\bB^-)&=-\lambda_L \bn \times \bl^-,
      \end{split}
    \end{equation}
    imply that
    \begin{equation}
      -\bn \times \bn \times \bJ^-_1=\bn \times \bm^-\text{ and }
       -\bn \times \bn \times \bB^-_1=-\lambda_L\bn\times \bl^-.
    \end{equation}
    To show that the sources are  zero, we let $\Omega^c_R=\Omega^c\cap B_R,$ and
    observe that the equations satisfied by $(\bJ^-_1,\bB^-_1)$ imply that
    \begin{equation}\label{eqn38}
      \int_{\Omega^c_R} \nabla \cdot (\bJ_1^- \times \bB_1^-)=-\int_{\Omega^c_R}\left[\frac{1}{\lambda_L^2}|\bB^-_1|^2 +
        |\bJ^-_1|^2\right].
    \end{equation}
    On the other hand,  Stokes theorem implies that
    \begin{equation}\label{eqn5.9.5}
      \begin{split}
         \int_{\Omega^c_R}\nabla \cdot (\bJ^-_1\times \bB^-_1)&=
         \int_{\pa\Omega^c_R} (\bJ^-_1 \times \bB^-_1) \cdot \bn dS \\
         &= \lambda_L\int_{\Gamma}(\bm^-\times \bl^-) \cdot \bn+\int_{\pa B_R}(\bJ^-_1\times \bB^-_1) \cdot \bn dS\\
         &= -\int_{\Gamma}(\nabla_{\Gamma} \Delta_{\Gamma}^{-1}q^- \times \bl^-) \cdot \bn+\int_{\pa B_R}\bJ^-_1\times \bB^-_1 \cdot \bn  \\
          &= -\int_{\Gamma}\nabla_{\Gamma} \cdot \bn \times (\Delta_{\Gamma}^{-1}q^- \bl^-) + \int_{\Gamma} \Delta_{\Gamma}^{-1}q^- \nabla_{\Gamma} \cdot (\bn \times \bl^-)+
          \int_{\pa B_R}\bJ^-_1\times \bB^-_1 \cdot \bn \, .\\
      \end{split}
    \end{equation}
    As $\lambda_L\bm^-=
    \Delta_{\Gamma}^{-1}q^-$. The first term in the last line of~\eqref{eqn5.9.5} is 0 by Stokes theorem on $\Gamma,$ the second term is zero since $\nabla_{\Gamma} \cdot \bn \times \bl^-=0$, and the outgoing condition implies that
    \begin{equation}
      \lim_{R\to\infty}\int_{\pa B_R}\bJ^-_1\times \bB^-_1\geq 0,
    \end{equation}
    which, along with~\eqref{eqn38}, shows that $(\bJ^-_1,\bB^-_1)$ also
    vanish in $\Omega^c.$ From the jump relations we see that $\Delta_{\Gamma}^{-1}q^-$ is
    constant, and therefore 0. Since the components of the Hodge decomposition are orthogonal this also implies that $\Delta_{\Gamma}^{-1}r^-=0$ and  $\bl_H^-=0.$ 
    Since $\Delta_{\Gamma}$ is injective on functions of mean zero we see that $q^-=r^-=0,$
    which completes the proof that the Debye representation is injective.

    \section{$\bJ^-$ is a Physical Current}
    In this section we prove that the field $\bJ^-$ found by solving the
    London equation in $\Omega$ is, in fact, the physical current. By this we
    mean that the external field $\bB^+$ is found by applying the Biot-Savart
    operator to $\bJ^-.$ To establish this we need some notation, and further
    uniqueness results.

    We begin with a uniqueness theorem.
    \begin{theorem}\label{thm2}
Let $\Omega\subset\bbR^3$ be a bounded domain with smooth boundary. Suppose that
$\bB\in\cC^0(\bbR^3),$ satisfies
\begin{equation}
    \nabla \times \bB=0,\quad \nabla \cdot \bB=0\text{ in } \mathbb{R}^{3} \setminus \Gamma
    \end{equation}
and $\bB(\bx)=O(|\bx|^{-1}),$ as $x\to\infty,$ then $\bB\equiv 0.$
    \end{theorem}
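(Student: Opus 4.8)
The plan is to manufacture a single scalar potential for $\bB$ on all of $\bbR^3$ and then invoke Liouville's theorem. First, taking the curl of the first equation and using the identity $\nabla\times\nabla\times\bB=\nabla(\nabla\cdot\bB)-\Delta\bB$ together with $\nabla\cdot\bB=0$, we see that $\Delta\bB=0$ in $\bbR^3\setminus\Gamma$; hence each component of $\bB$ is harmonic, and in particular $\bB$ is smooth in the open sets $\Omega$ and $\Omega^c$ while remaining continuous across $\Gamma$ by hypothesis.

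Next I would build single-valued scalar potentials in each region. Since $\bB$ is curl-free there, its circulation around a loop depends only on the loop's homology class. The A-cycles $A_j$ bound the spanning surfaces $S_{A_j}\subset\Omega$ and generate $H_1(\Omega^c)$; approaching $\Gamma$ from inside (where $\bB$ is smooth and continuous up to the boundary) and applying Stokes' theorem on surfaces pushed slightly into $\Omega$, we obtain $\oint_{A_j}\bB\cdot d\ell=\int_{S_{A_j}}(\nabla\times\bB)\cdot\bn\,dS=0$. Because $\bB$ is continuous across $\Gamma$, this is also the circulation computed in $\Omega^c$; as the $A_j$ generate $H_1(\Omega^c)$, every period of $\bB$ in $\Omega^c$ vanishes, so there is a single-valued harmonic $\phi^+$ with $\bB=\nabla\phi^+$ in $\Omega^c$. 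The symmetric argument—using that the B-cycles $B_j$ bound $S_{B_j}\subset\Omega^c$ and generate $H_1(\Omega)$—produces a single-valued harmonic $\phi^-$ with $\bB=\nabla\phi^-$ in $\Omega$.

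It remains to glue these. On $\Gamma$ the full vector $\bB$ is continuous, so $\nabla\phi^-=\nabla\phi^+$ there. Matching tangential parts gives $\nabla_{\Gamma}(\phi^-|_\Gamma)=\nabla_{\Gamma}(\phi^+|_\Gamma)$, so (as $\Gamma$ is connected) the two boundary traces differ by a constant, which I normalize away; matching normal parts gives $\pa_{\bn}\phi^-=\pa_{\bn}\phi^+$. The function $\phi$ equal to $\phi^\pm$ in the two regions is therefore $\cC^1$ across $\Gamma$ and harmonic on each side, hence harmonic on all of $\bbR^3$: testing against $\psi\in\CIc(\bbR^3)$ and integrating by parts in $\Omega$ and $\Omega^c$ separately, the $\Gamma$-boundary terms cancel exactly because $\phi$ and $\pa_{\bn}\phi$ are continuous across $\Gamma$, so $\Delta\phi=0$ distributionally and $\phi$ is smooth by Weyl's lemma. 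Then $\bB=\nabla\phi$ is componentwise harmonic on $\bbR^3$ and, by hypothesis, $O(|\bx|^{-1})$, hence bounded; Liouville's theorem forces each component to be constant, and the decay forces that constant to be zero, so $\bB\equiv0$.

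The main obstacle is the topological bookkeeping in the middle step: one must verify that the circulations of $\bB$ over the correct generating cycles vanish, exploiting continuity across $\Gamma$ to transfer the value computed on one side to the other, and must justify Stokes' theorem up to $\Gamma$ by a limiting argument using surfaces pushed slightly into the relevant region. Once the single-valued potentials $\phi^\pm$ are in hand, the $\cC^1$-gluing and the Liouville argument are routine.
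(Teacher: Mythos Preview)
Your argument is correct, but it takes a more circuitous route than the paper's. The paper observes directly that $\bB$ is a \emph{distributional} solution of $\Delta\bB=0$ on all of $\bbR^3$: integrating by parts against a test function on $\Omega$ and $\Omega^c$ separately, the boundary terms from the two sides cancel because $\bB$ is continuous across $\Gamma$. Elliptic regularity then makes $\bB$ globally smooth, so $\nabla\times\bB=0$ holds classically throughout the simply connected space $\bbR^3$, and a single-valued potential $f(\bx)=\int_0^{\bx}\bB\cdot d\ell$ exists immediately---no period analysis required. The paper then applies the sharp Liouville theorem to $f$ (which has at most logarithmic growth), rather than to $\bB$ itself.

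Your approach trades that one-line distributional argument for an explicit topological bookkeeping of the $A$- and $B$-cycle periods, followed by a $\cC^1$ gluing of piecewise potentials. This works, and your use of standard Liouville on the bounded components of $\bB$ is arguably more elementary than invoking the sharp version on $f$. But the period computation and the gluing are exactly what the paper's ``smooth first, then simply connected'' maneuver sidesteps. One small caveat: your gluing step assumes $\Gamma$ is connected when normalizing the additive constant; if $\Gamma$ or $\Omega^c$ had several components you would need to adjust the constants component-by-component, though in the paper's setting $\Gamma$ is a single closed surface of genus $g$ and this is not an issue.
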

    \begin{proof}
      The first observation is that $\bB$ must actually be a smooth harmonic
      vector field in all of $\bbR^3.$  A simple integration by parts argument shows that $\bB$ is a distributional
      solution to $\Delta \bB=0$, and so, by elliptic regularity it is a classical solution.
      Since $\nabla \times \bB=0$ in $\bbR^3$ it follows that
      \begin{equation}
        f(\bx)=\int_{0}^x \bB \cdot d\ell
      \end{equation}
      is a well defined function that satisfies
      \begin{equation}
        \nabla f=\bB, \quad \Delta f=0\text{ and }|f(\bx)|=O(|\log|\bx||)\text{ as }|\bx|\to\infty.
      \end{equation}
      The sharp form of Liouville's theorem implies that $f$ is constant, which
      completes the proof of the theorem.
    \end{proof}
    
    As a corollary we see that the solution to the following transmission
    problem, with given $\bJ^-$ in $\Omega$ and $\bB^{\In}$ on $\Gamma$ has a unique solution
    \begin{equation}\label{eqn45}
      \begin{split}
        \nabla \times \bB^-=\bJ^-,\quad \nabla \cdot \bB^-=&0\text{ in }\Omega \, ,\\
        \nabla \times \bB^+=0,\quad \nabla \cdot \bB^+=&0\text{ in }[\overline{\Omega}]^c\text{
          with } |\bB^+(\bx)|=O(|\bx|^{-1}) \, ,\\
        \bB^- - \bB^+ = \bB^{\In} \quad \text{on }\Gamma &
      \end{split}
    \end{equation}
    is unique as well.

    The second order of business is the Biot-Savart formula.
    \begin{theorem}[Biot-Savart Law] Suppose that $\bJ$ satisfies 
    $\nabla \cdot \bJ = 0$ in $\Omega$ and $\bJ \cdot \bn = 0$ 
    on $\Gamma$.  Let
    \begin{equation}
      \btheta(\bx)=\nabla \times \int_{\Omega} \frac{\bJ}{4\pi |\bx-\by|} dV \, ,
    \end{equation}
    then except on $\Gamma$,
    $$\nabla \times \btheta = \bJ$$
    \end{theorem}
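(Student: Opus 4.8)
The plan is to realize $\btheta$ as the curl of the Newtonian vector potential of $\bJ$ and then reduce the claim to the vanishing of a single divergence. First I would introduce
\begin{equation}
  \bA(\bx) = \int_\Omega g_0(\bx,\by)\,\bJ(\by)\,dV, \qquad g_0(\bx,\by) = \frac{1}{4\pi|\bx-\by|},
\end{equation}
so that $\btheta = \nabla\times\bA$. Since $-\Delta_\bx g_0(\bx,\by) = \delta(\bx-\by)$, standard potential theory — valid under a mild regularity assumption on $\bJ$, e.g. Hölder continuity, which holds in our application — gives componentwise the Poisson identity $-\Delta\bA = \bJ$ in $\Omega$ and $-\Delta\bA = 0$ in $[\overline{\Omega}]^c$. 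Invoking the vector identity $\nabla\times(\nabla\times\bA) = \nabla(\nabla\cdot\bA) - \Delta\bA$, on $\bbR^3\setminus\Gamma$ we obtain
\begin{equation}
  \nabla\times\btheta = \nabla(\nabla\cdot\bA) + \bJ,
\end{equation}
where $\bJ$ is understood to be extended by zero outside $\Omega$. The theorem therefore follows once I show that $\nabla\cdot\bA \equiv 0$ off $\Gamma$, since then the gradient term drops out.

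To compute the divergence I would differentiate under the integral sign — legitimate because $\nabla_\bx g_0(\bx,\by)$ is of size $|\bx-\by|^{-2}$ and hence absolutely integrable over $\Omega$ in three dimensions — and then transfer the derivative to the $\by$ variable using $\nabla_\bx g_0 = -\nabla_\by g_0$:
\begin{equation}
  \nabla\cdot\bA(\bx) = \int_\Omega \nabla_\bx g_0(\bx,\by)\cdot\bJ(\by)\,dV
    = -\int_\Omega \nabla_\by g_0(\bx,\by)\cdot\bJ(\by)\,dV.
\end{equation}
Because $\nabla\cdot\bJ = 0$ in $\Omega$, the integrand equals $-\nabla_\by\cdot\bigl(g_0(\bx,\by)\bJ(\by)\bigr)$, and the divergence theorem converts the volume integral into a boundary integral over $\Gamma$. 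That boundary term carries the factor $\bJ\cdot\bn$, which vanishes by hypothesis, so $\nabla\cdot\bA(\bx) = 0$ for every $\bx\notin\overline{\Omega}$.

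The remaining, and genuinely delicate, point is the case $\bx\in\Omega$, where the kernel $g_0(\bx,\cdot)$ is singular at $\by = \bx$ and the naive integration by parts is not justified. Here I would excise a ball $B_\ep(\bx)\subset\Omega$ and carry out the integration by parts on $\Omega\setminus B_\ep(\bx)$. This produces the same vanishing contribution from $\Gamma$ together with a contribution from $\pa B_\ep(\bx)$; on that sphere $g_0 = 1/(4\pi\ep)$ and the surface area is $4\pi\ep^2$, so, with $\bJ$ bounded, the extra term is $O(\ep)$ and disappears as $\ep\to 0$. Thus $\nabla\cdot\bA\equiv 0$ on all of $\bbR^3\setminus\Gamma$, which combined with the displayed identity yields $\nabla\times\btheta = \bJ$ off $\Gamma$. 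I expect the main obstacle to be precisely this diagonal analysis: establishing the Poisson identity $-\Delta\bA = \bJ$ and justifying the differentiation and integration by parts across the singularity require the usual Newtonian-potential estimates, rather than any new idea.
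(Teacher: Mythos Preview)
Your proposal is correct and follows essentially the same route as the paper: apply the identity $\nabla\times\nabla\times = \nabla\nabla\cdot - \Delta$, use that $-\Delta$ of the Newtonian potential returns $\bJ$, and then kill the $\nabla(\nabla\cdot\bA)$ term by transferring the $\bx$-derivative to $\by$, invoking $\nabla\cdot\bJ=0$, and applying the divergence theorem together with $\bJ\cdot\bn=0$. If anything, you are more careful than the paper, which does not explicitly address the singularity at $\by=\bx$ when $\bx\in\Omega$; your ball-excision argument fills that gap.
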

\begin{proof}
   For any $\bx$ not in $\Gamma$, 
   \begin{equation}
   \begin{split}
   \nabla \times \btheta &= \nabla \times \nabla \times \int_{\Omega} \frac{\bJ}{4\pi |\bx-\by|} dV \\
   &= \nabla \nabla \cdot \int_{\Omega} \frac{\bJ}{4\pi |\bx-\by|} dV - \Delta \int_{\Omega} \frac{\bJ}{4\pi |\bx-\by|} dV\\
   &= \nabla \int_{\Omega}  \left(\nabla_{y} \cdot \left( \frac{\bJ}{4\pi | \bx-\by|} \right) - \frac{\nabla_{y} \cdot \bJ(\by)}{4\pi |\bx-\by|} \right) dV + \bJ \\
   &= \bJ + \nabla \int_{\Gamma} \frac{\bJ \cdot \bn}{4\pi |\bx-\by|} - \nabla \int_{\Omega} \frac{\nabla_{y} \cdot \bJ(\by)}{4\pi |\bx-\by |} dV = \bJ \,.
   \end{split}
   \end{equation}
      This establishes the
         Biot-Savart law, and makes explicit the hypotheses needed to make it
         hold.
\end{proof}

To complete this circle of ideas we find the formula  expressing a divergence 
free vector field $\bB$ in a domain $\Omega$  in terms of 
$\nabla \times \bB$ and appropriate boundary data.
\begin{theorem}\label{thm4}
  Let $\bB$ be a divergence free vector field defined in the smoothly bounded domain $\Omega.$ 
  Suppose that $\nabla \times \bB=\bJ,$ with $\nabla \times \bB \cdot \bn= \nabla_{\Gamma} \cdot \bn \times \bB = 0$ on $\Gamma$. 
  In this case
  \begin{equation}\label{eqn:Brep}
    \bB=\nabla \times \int_{\Omega}\frac{\bJ}{4\pi |\bx-\by|} dV + \bB_{t} + \bB_{n} \, ,
  \end{equation}
  where
  \begin{equation}
  \label{eq:bbdrydef}
    \bB_t=-\nabla \times \int_{\Gamma} \frac{\bn \times \bB}{4\pi |\bx-\by|} dS \quad \text{ and } \quad
    \bB_n= \nabla \int_{\Gamma} \frac{\bB \cdot \bn}{4\pi |\bx-\by|} dS
  \end{equation}
  are harmonic vector fields in $\Gamma^c.$
\end{theorem}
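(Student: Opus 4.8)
The plan is to build the representation directly from the Newtonian potential of $\bB$ itself, and to let the two boundary terms produced by integration by parts become precisely $\bB_t$ and $\bB_n$. Set
\[
  \bP(\bx)=\int_{\Omega}\frac{\bB(\by)}{4\pi|\bx-\by|}\,dV,
\]
and recall that for the weakly singular kernel $g_0(\bx,\by)=1/(4\pi|\bx-\by|)$ one has $-\Delta_{\bx}\bP=\bB$ in $\Omega$ (componentwise inversion of the Laplacian). Combining this with the vector identity $-\Delta\bP=\nabla\times(\nabla\times\bP)-\nabla(\nabla\cdot\bP)$ yields
\[
  \bB=\nabla\times\nabla\times\bP-\nabla\nabla\cdot\bP,
\]
so the entire problem reduces to evaluating the curl-curl and grad-div of a single Newtonian potential.

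First I would compute $\nabla_{\bx}\times\bP$ by moving the curl onto the kernel, using $\nabla_{\bx}g_0=-\nabla_{\by}g_0$, and integrating by parts in $\by$ via $\int_{\Omega}\nabla\times\bV\,dV=\int_{\Gamma}\bn\times\bV\,dS$. Since $\nabla_{\by}\times\bB=\bJ$, this gives
\[
  \nabla_{\bx}\times\bP=\int_{\Omega}\frac{\bJ}{4\pi|\bx-\by|}\,dV-\int_{\Gamma}\frac{\bn\times\bB}{4\pi|\bx-\by|}\,dS,
\]
and taking the curl once more produces exactly the Biot--Savart volume term plus $\bB_t$. Analogously, computing $\nabla_{\bx}\cdot\bP$ and integrating by parts, now using $\nabla\cdot\bB=0$ to kill the interior term, gives $\nabla_{\bx}\cdot\bP=-\int_{\Gamma}g_0\,\bB\cdot\bn\,dS$, so that $-\nabla\nabla\cdot\bP=\bB_n$. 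Assembling the pieces establishes~\eqref{eqn:Brep}.

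It remains to verify that $\bB_t$ and $\bB_n$ are harmonic in $\Gamma^c$. For $\bB_n$ this is immediate: it is the gradient of a single-layer potential, which is harmonic off $\Gamma$, and the gradient of a harmonic function is a harmonic vector field. For $\bB_t=-\nabla\times\cS_{0}[\bn\times\bB]$, divergence-freeness is automatic, and since $\cS_{0}[\bn\times\bB]$ is harmonic off $\Gamma$ we have $\nabla\times\bB_t=-\nabla(\nabla\cdot\cS_{0}[\bn\times\bB])$. The only place the boundary hypotheses genuinely enter — and the step I expect to be the crux — is showing this scalar potential is curl-free. Here one writes $\nabla\cdot\cS_{0}[\bn\times\bB]=-\int_{\Gamma}\nabla_{\by}g_0\cdot(\bn\times\bB)\,dS$, observes that $\bn\times\bB$ is tangential so that only the surface gradient $\nabla_{\Gamma}g_0$ contributes, and integrates by parts on the closed surface $\Gamma$ to move the derivative onto $\bn\times\bB$. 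This converts the expression into $\int_{\Gamma}g_0\,\nabla_{\Gamma}\cdot(\bn\times\bB)\,dS$, which vanishes precisely by the hypothesis $\nabla_{\Gamma}\cdot(\bn\times\bB)=0$ (equivalent to $\nabla\times\bB\cdot\bn=0$). Hence $\bB_t$ is both curl- and divergence-free off $\Gamma$, completing the proof. The only technical care needed throughout is justifying differentiation under the integral sign and the two integration-by-parts steps against the weak singularity of $g_0$, which is routine for the smooth boundary data at hand.
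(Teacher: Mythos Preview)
Your proposal is correct and follows essentially the same approach as the paper: both start from the Newtonian potential of $\bB$, apply the identity $-\Delta=\nabla\times\nabla\times-\nabla\,\nabla\cdot$, and integrate by parts to produce the Biot--Savart term together with the two surface contributions $\bB_t$ and $\bB_n$. Your verification that $\bB_t$ is curl-free, via $\nabla\cdot\cS_0[\bn\times\bB]=\cS_0[\nabla_\Gamma\cdot(\bn\times\bB)]=0$, is exactly the step the paper takes as well.
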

\begin{remark}
  Recall that a vector field $\bA$ is a harmonic vector field 
  if it satisfies the equations
  \begin{equation}
    \nabla \times \bA= \nabla \cdot \bA=0.
  \end{equation}
  This of course implies that $-\Delta\bA=(\nabla \times \nabla -\nabla \nabla \cdot)\bA=0,$
  and on a closed compact manifold these conditions are equivalent,
  but, in a domain with boundary, the former is a stronger condition.
  
  We let $\cH^{2}_{R}(\Omega^c)$ denote the vector space of outgoing harmonic vector fields in $\Omega^{c}$ that satisfy $\bB\cdot \bn = 0$ on $\Gamma$. For these vector fields~\cref{eqn:Brep} implies that
  \begin{equation}
  \bB(\bx) = -\nabla \times \int_{\Gamma} \frac{\bn \times \bB}{4\pi |\bx-\by|} dS \, \quad \text{for } \bx \in \Omega^{c} \, .
  \end{equation}
 It is a consequence of Hodge theory that $\cH^{2}_{R}(\Omega^{c}) \simeq H^2_{\dR}(\Omega^c,\Gamma)$, which is a vector space of dimension g, if $\Gamma$ has genus g. An element of this vector space is completely determined by the periods
 \begin{equation}
 \int_{A_{j}} \bB \cdot d \ell \, \quad \text{for } j=1,2\ldots,g.
 \end{equation}
 This shows that $\cH^{2}_{R}(\Omega^{c})$ is another $g-$dimensional vector space that could be used for $V_{+}$ in the definition of $\bA^{+}$.
\end{remark}
\begin{proof}
As before we begin with the
formula
\begin{equation}
\label{eq:lapsplit}
  -\Delta \int_{\Omega}\frac{\bB}{4\pi |\bx-\by|} dV = (\nabla \times \nabla  - \nabla \nabla \cdot) 
  \int_{\Omega}\frac{\bB}{4\pi |\bx-\by|} dV = \bB(\bx) \,.
\end{equation}
We need to compute the curl and the divergence of the integral. We see that
\begin{equation}
\label{eq:bcurlcomp}
\begin{split}
\nabla_{x} \times \int_{\Omega}\frac{\bB}{4\pi |\bx-\by|} dV &= 
\int_{\Omega} \frac{\nabla_{y} \times \bB}{4\pi |\bx-\by|}  dV - \int_{\Omega}
 \nabla_{y} \times \left(\frac{\bB}{4\pi |\bx-\by|}\right) dV  \\
 &= \int_{\Omega} \frac{\nabla_{y} \times \bB}{4\pi |\bx-\by|} dV - \int_{\Gamma} \frac{\bn \times \bB}{4\pi |\bx-\by|} dS \, ,
 \end{split}
\end{equation}
and
\begin{equation}
\label{eq:bdivcomp}
\begin{split}
\nabla_{x} \cdot \int_{\Omega}\frac{\bB}{4\pi |\bx-\by|} dV &= 
\int_{\Omega} \frac{\nabla_{y} \cdot \bB}{4\pi |\bx-\by|} dV - \int_{\Omega}
 \nabla_{y} \cdot \left(\frac{\bB}{4\pi |\bx-\by|}\right) dV  \\
 &= \int_{\Omega} \frac{\nabla_{y} \cdot \bB}{4\pi |\bx-\by|} dV - \int_{\Gamma} \frac{\bB \cdot \bn}{4\pi |\bx-\by|} dS \, .
 \end{split}
 \end{equation}
 The result then follows by combining~\cref{eq:lapsplit,eq:bcurlcomp,eq:bdivcomp}, and that $\bB$ is divergence free in $\Omega$. All that is left to show is that $\bB_{t}$ and $\bB_{n}$ are harmonic vector fields in $\Gamma^{c}$. 
 Trivially, $\nabla \cdot \bB_{t} = 0$, and $\nabla \times \bB_{n} = 0$. 
 Furthermore 
 \begin{equation}
 \nabla \cdot \bB_{n} = \Delta \int_{\Gamma} \frac{\bB \cdot \bn}{4\pi |\bx-\by|} dS = 0 \, ,
 \end{equation}
since $1/|\bx-\by|$ is a harmonic function for $x \in \Gamma^{c}$. All that remains to show is that
$\nabla \times \bB_{t} = 0$. To this end, observe that
\begin{equation}
\begin{split}
\nabla \times \bB_{t} &= -(\nabla \nabla \cdot  -\Delta) \int_{\Gamma} \frac{\bn \times \bB}{4\pi |\bx-\by|} dS \\
&= -\nabla \nabla \cdot \int_{\Gamma} \frac{\bn \times \bB}{4\pi |\bx-\by|} dS \, ,
\end{split}
\end{equation}
since $1/|\bx-\by|$ is a harmonic function. Thus, it suffices to show that
\begin{equation}
\nabla \cdot \int_{\Gamma} \frac{\bn \times \bB}{4\pi |\bx-\by|} dS = 0 \, .
\end{equation}
To this end observe that
\begin{equation}
\label{eq:brep2}
\nabla \cdot \int_{\Gamma} \frac{\bn \times \bB}{4\pi |\bx-\by|} dS = 
\int_{\Gamma} \frac{\nabla_{\Gamma} \cdot \bn \times \bB}{4 \pi | \bx-\by|} dS = 0
\end{equation}
\end{proof}

Observe that the first term in~\cref{eq:brep2} is precisely the Biot-Savart law, but note that the solution to 
$\nabla \times \bB = \bJ$, $\nabla \cdot \bB = 0$ in $\Omega$ with $\bJ \cdot \bn =0$ on $\Gamma$ is not generally
given by the Biot-Savart formula alone, but may require the addition of a harmonic vector field determined by the boundary
values of $\bB$. 

This finally brings us to the main result of this section.
\begin{theorem}
  Suppose that $\bB\in \cC^0(\bbR^3)$ solves~\eqref{eqn45}. If
  $\bB^{\In}$ extends to $\Omega$ as a solution to $\nabla \times \bB^{\In}=0,
  \nabla \cdot \bB^{\In}=0,$ then, for $\bx\in\Omega^c$ we have that
  \begin{equation}
    \bB^+(\bx)=\nabla \times \int_{\Omega}\frac{\bJ^-(\by)}{4\pi |\bx-\by|} dV
  \end{equation}
  That is, the outgoing field is generated by the currents within the superconductor.
\end{theorem}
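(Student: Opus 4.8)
The plan is not to manipulate $\bB^+$ directly, but to exhibit an explicit solution of the transmission problem~\eqref{eqn45} built from the Biot--Savart field of $\bJ^-$ and then to invoke the uniqueness corollary of \cref{thm2}. Accordingly, set
\begin{equation}
  \btheta(\bx)=\nabla \times \int_{\Omega}\frac{\bJ^-(\by)}{4\pi|\bx-\by|}\,dV ,
\end{equation}
which is precisely the field appearing on the right-hand side of the asserted identity, now viewed as a field on all of $\bbR^3$.

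First I would record the properties of $\btheta$ supplied by the Biot--Savart law. Because $\nabla \cdot \bJ^-=0$ in $\Omega$ and $\bJ^- \cdot \bn=0$ on $\Gamma$, that theorem gives $\nabla \times \btheta=\bJ^-$ in $\Omega$ and $\nabla \times \btheta=0$ in $\Omega^c$, while $\nabla \cdot \btheta=0$ throughout $\bbR^3$ as a curl. Since the vector potential $\int_{\Omega}\bJ^-/(4\pi|\bx-\by|)\,dV$ is a Newtonian volume potential of a (H\"older) continuous density, it is $\cC^{1,\alpha}$ up to and across $\Gamma$, so $\btheta$ is continuous across $\Gamma$; and differentiating the kernel once yields $\btheta(\bx)=O(|\bx|^{-2})$, so $\btheta$ satisfies the radiation condition.

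Next I would assemble the candidate. Using the hypothesis that $\bB^{\In}$ extends to $\Omega$ as a harmonic vector field, define
\begin{equation}
  \tilde\bB=\btheta+\bB^{\In}\ \text{in }\Omega ,\qquad \tilde\bB=\btheta\ \text{in }\Omega^c .
\end{equation}
In $\Omega$ one has $\nabla \times \tilde\bB=\bJ^-+0=\bJ^-$ and $\nabla \cdot \tilde\bB=0$; in $\Omega^c$ one has $\nabla \times \tilde\bB=\nabla \cdot \tilde\bB=0$ together with the $O(|\bx|^{-1})$ decay; and because $\btheta$ is continuous across $\Gamma$, the jump of $\tilde\bB$ across $\Gamma$ is exactly $(\btheta+\bB^{\In})-\btheta=\bB^{\In}$. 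Hence $\tilde\bB$ solves~\eqref{eqn45} with the same data $(\bJ^-,\bB^{\In})$ as $\bB$.

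Finally, uniqueness for~\eqref{eqn45} --- the corollary of \cref{thm2}, obtained by applying that theorem to the difference of two solutions, which is curl- and divergence-free on $\bbR^3\setminus\Gamma$, continuous across $\Gamma$, and $O(|\bx|^{-1})$ --- forces $\bB=\tilde\bB$, and restriction to $\Omega^c$ gives $\bB^+=\btheta$, the claim. The two PDE checks are routine; the point needing genuine care, and the one I expect to be the main obstacle, is the regularity input: that the Biot--Savart field $\btheta$ is continuous up to and across $\Gamma$ (so that its jump is exactly $\bB^{\In}$, with no spurious surface defect) and decays like $O(|\bx|^{-2})$. These are exactly what allow the explicit candidate to satisfy~\eqref{eqn45} on the nose and permit \cref{thm2} to be applied to the difference.
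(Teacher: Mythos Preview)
Your proof is correct and, in fact, more direct than the paper's primary argument. Both proofs culminate in the same step --- invoking the uniqueness corollary of \cref{thm2} --- but they arrive there differently. The paper develops layer-potential representation formul{\ae} (from \cref{thm4}) for $\bB^+$, $\bB^{\In}$, and $\bB^-$ separately, and then substitutes the boundary condition $\bB^-=\bB^++\bB^{\In}$ into the boundary integrals to reduce the representation of $\bB^-$ to the Biot--Savart term plus the $\bB^{\In}$-boundary terms; this shows that $\bB^-$ and $\tbB^+$ differ by $\bB^{\In}$ on $\Gamma$, whence uniqueness finishes. You bypass the layer-potential algebra entirely by directly exhibiting $\btheta+\bB^{\In}$ in $\Omega$ and $\btheta$ in $\Omega^c$ as a solution of~\eqref{eqn45}, checking the PDEs and the jump from the Biot--Savart law and the regularity of volume potentials.

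The paper itself acknowledges your route, at least in the special case $\bB^{\In}\equiv 0$: in the remark following its proof it observes that~\eqref{eqn69} ``can also be deduced directly from Theorem~\ref{thm2}, since the field defined in~\eqref{eqn69} is easily seen to be a solution to~\eqref{eqn45}.'' Your argument is precisely the extension of that observation to general $\bB^{\In}$, using the hypothesis that $\bB^{\In}$ extends harmonically into $\Omega$. What your approach buys is economy: you never need \cref{thm4} or the Stratton--Chu formul{\ae}. What the paper's approach buys is a more explicit decomposition of $\bB^-$ into its Biot--Savart and boundary contributions, which connects naturally to the layer-potential machinery used elsewhere in the paper.
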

\begin{proof}
  The proof is based on~\cref{thm2}, the foregoing computations, and
  the jump formul{\ae} for the boundary integrals in~\cref{eq:bbdrydef}, see~\cite{ColtonKress}. Because $\bB^+$
  is outgoing, we observe that~\cref{eqn:Brep} can be applied to represent
  $\bB^+$ for $\bx\in\Omega^c:$
  \begin{equation}\label{eqn63}
    \bB^+(\bx)=\nabla \times \int_{\Gamma} \frac{\bn \times \bB^+}{4\pi |\bx-\by|} dS -
    \nabla \int_{\Gamma} \frac{\bB^+ \cdot \bn}{4\pi |\bx-\by|} dS \, .
  \end{equation}
  Here the signs are reversed because we are representing a field in
  $\Omega^c,$ and we orient $\Gamma$ as the boundary of $\Omega.$  The
  formula in~\eqref{eqn63} also defines a harmonic field in $\Omega.$ It follows from the
  jump relations for these integrals that this field has zero boundary data and
  is therefore zero in $\Omega.$  We can similarly represent $\bB^{\In}(\bx)$ for
  $\bx\in\Omega$ in this form:
  \begin{equation}\label{eqn64}
     \bB^{\In}(\bx)=-\nabla \times \int_{\Gamma} \frac{\bn \times \bB^{\In}}{4\pi |\bx-\by|} dS +
    \nabla \int_{\Gamma} \frac{\bB^{\In} \cdot \bn}{4\pi |\bx-\by|} dS \, .
  \end{equation}

  Finally we represent $\bB^-$ for $x\in\Omega$ as
   \begin{equation}\label{eqn65}
    \bB^-(\bx)=\nabla \times \int_{\Omega} \frac{\bJ}{4\pi |\bx -\by|} dV -\nabla \times \int_{\Gamma} \frac{\bn \times \bB^-}{4\pi |\bx-\by|} dS +
    \nabla \int_{\Gamma} \frac{\bB^- \cdot \bn}{4\pi |\bx-\by|} dS
      \end{equation}
   Using the boundary condition, $\bB^-=\bB^++\bB^{\In}$ along
   $\Gamma,$ and the fact, noted above, that the $\bB^+$-contribution will
   be zero for $x\in\Omega,$ gives
   \begin{equation}\label{eqn66}
   \bB^-(\bx)=\nabla \times \int_{\Omega} \frac{\bJ}{4\pi |\bx -\by|} dV -\nabla \times \int_{\Gamma} \frac{\bn \times \bB^{\In}}{4\pi |\bx-\by|} dS +
    \nabla \int_{\Gamma} \frac{\bB^{\In} \cdot \bn}{4\pi |\bx-\by|} dS \, .
   \end{equation}
   If we define
   \begin{equation}
     \tbB^+(\bx)=\nabla \times \int_{\Omega}\frac{\bJ^-}{4\pi |\bx-\by|} dV\text{ for }\bx\in\Omega^c,
   \end{equation}
   then $\tbB^+$ is harmonic and outgoing in $\Omega^c.$ The fact that the Biot-Savart
   term is continuous across $\Gamma$ along with equations~\eqref{eqn64}
   and~\eqref{eqn66} imply that, for $\bx_0\in\Gamma,$ we have:
   \begin{equation}
     \begin{split}
       \lim_{x\to x_0^-}\bB^-(\bx)-&\lim_{x\to x_0^+}\tbB^+(\bx)\\
       &=
    \lim_{x\to x_0^-}\left[ -\nabla \times \int_{\Gamma} \frac{\bn \times \bB^{\In}}{4\pi |\bx-\by|} dS +
    \nabla \int_{\Gamma} \frac{\bB^{\In} \cdot \bn}{4\pi |\bx-\by|} dS
    \right]\\
      &=\bB^{\In}(\bx_0).
      \end{split}
   \end{equation}
   The conclusion of the theorem now follows from the corollary of~\cref{thm2}.
\end{proof}

As a corollary of the proof we see that if $\bB^{\In}\equiv 0,$ then we have
the representation formula
\begin{equation}\label{eqn69}
  \bB(\bx)=\nabla \times \int_{\Omega}\frac{\bJ^-(\by)}{4\pi |\bx-\by|}dV
\end{equation}
holding globally throughout $\bbR^3.$ This fact can also be deduced directly
from Theorem~\ref{thm2}, since the field defined in~\eqref{eqn69} is easily seen
to be a solution to~\eqref{eqn45} with $\bB^{\In}=0.$

\section{Numerical results}
\label{sec:numres}
The system of Fredholm equations for the unknown functions $q^{-},r^{-},q^{+}$ and the
$4g$ constants which define $\bl_{H}^{\pm}$ is given by equations~\eqref{eqn26},~\eqref{eqn27},~\eqref{eqn29}, and~\eqref{eqn30}. However, in order to compute to $\bl^{-}$ and $\bm^{-}$, we require access to $\Delta_{\Gamma}^{-1}$ which is not analytically known on complicated surfaces.
Let $f^{-}$, $g^{-}$, and $f^{+}$ be the solutions to the following Laplace-Beltrami problems,
\begin{equation}
\left( \Delta_{\Gamma} + \cW \right) f^{-}= q^{-} \, , \quad (\Delta_{\Gamma} + \cW) f^{+} = q^{+} - \cW[q^{+}] \, ,\quad \text{and} \, \quad (\Delta_{\Gamma} + \cW) g^{-} = r^{-} \, ,
\end{equation}
where $\cW[q]$ is the constant function whose magnitude is the average value of $q$ on the surface. 
It can be shown that $\Delta_{\Gamma} + \cW$, is an invertible operator, which maps  the space of mean zero functions to itself.
We solve Laplace-Beltrami problems by representing the solution of $\Delta_{\Gamma} u = f$, as $u = \cS_{0}^{2}[\sigma]$. Suppose that $f^{-} = \cS_{0}^2[\rho^{-}]$, $f^{+} = \cS_{0}^{2}[\rho^{+}]$, and $g^{-} = \cS_{0}^2[\mu^{-}]$.
Then it follows from~\cref{eq:identites} that $\rho^{+}$, $\rho^{-}$, and $\mu^{-}$ satisfy
\begin{equation}
\label{eqn:lapbelinv}
\begin{aligned}
-\frac{\rho^{-}}{4} + \left( (\cS_{0}')^2 - \left( \cS_{0}'' + \cD_{0}' - 2H \cS_{0}' \right)  \cS_{0} + \cW \right)[\rho^{-}] &= q^{-} \, , \\
-\frac{\rho^{+}}{4} + \left( (\cS_{0}')^2 - \left( \cS_{0}'' + \cD_{0}' - 2H \cS_{0}' \right)  \cS_{0} + \cW \right)[\rho^{-}] &= q^{+} - \cW[q^{+}]\, , \\
-\frac{\mu^{-}}{4} + \left( (\cS_{0}')^2 - \left( \cS_{0}'' + \cD_{0}' - 2H \cS_{0}' \right)  \cS_{0} + \cW \right)[\rho^{-}] &= r^{-} \, .
\end{aligned}
\end{equation}
The surface currents $\bl^{-}$, and $\bm^{-}$ can be expressed in terms of $\rho^{-},\rho^{+},$ and $\mu^{-}$ as
\begin{equation}
\begin{split}
  \bm^- &=\frac{1}{\lambda_L}\nabla_{\Gamma} \cS_{0}^2 [\rho^-]\\
  \bl^-&=-\frac{1}{\lambda_L}\left[\nabla_{\Gamma} \cS_{0}^2[\mu^-] + \bl_H^-  \right] \, .
  \end{split}
\end{equation}
We then solve the Fredholm system of equations~\eqref{eqn26},~\eqref{eqn27},~\eqref{eqn29},~\eqref{eqn30}, and~\eqref{eqn:lapbelinv} for the functions $q^{-},q^{+},r^{-},\rho^{-},\rho^{+},\mu^{-}$, and the $4g$ constants defining $\bl_{H}^{\pm}$. 
All of the layer potentials that arise in these equations are numerically computed using the fast multipole accelerated locally corrected quadrature method of~\cite{fmm3dbiepaper} implemented in the software packages~\texttt{fmm3dbie} (\url{https://gitlab.com/fastalgorithms/fmm3dbie}) and accelerated using the fast multipole library~\texttt{FMM3D} (\url{https://github.com/flatironinstitute/FMM3D}).
The harmonic vector fields on the surface are also not analytically known on general geometries. These are numerically computed using the procedure discussed in~\cite{lapbel1,lapbel2}.

\subsection{Accuracy \label{sec:acc}}
We demonstrate the accuracy and convergence of our approach by constructing an incident field for which the interior and exterior magnetic fields and interior currents can be computed.  Let $\bx_{o}$ denote a point in the exterior region $\Omega^{c}$ and $\bx_{i}$ denote a point in the interior $\Omega$. 
Let $\bJ^{-}_{0}$ and $\tbB^{-}_{0}$ denote the fields constructed due to a point source located at $\bx_{o}$ and let $\bB^{+}_{0}$ be the field due to a point charge at $\bx_{i}$, i.e.
\begin{equation}
\begin{aligned}
\bB^{+}_{0} = \nabla g_{0}(\bx,\bx_{i}) \, , \quad \, \bJ^{-}_{0} = \text{Re} \left(-i \nabla \times \nabla \times \left(\bv_{o} g_{k}(\bx,\bx_{o}) \right) \right) \, , \quad \tbB^{-}_{0} = \text{Re} \left(k \nabla \times \left( \bv_{o} g_{k}(\bx,\bx_{o}) \right)\right) \, ,
\end{aligned}
\end{equation}
where $k = i/\lambda_{L}$, and $\bv_{o} \in \mathbb{C}^{3}$ is a constant.

Suppose that we solve the system of equations with $\bB^{\In} = \lambda_{L} \tbB^{-}_{0} - \bB^{+}_{0}$ and the boundary data for equation~\eqref{eqn28} is set to $\bJ^{-}_{0} \cdot \bn$ instead of $0$. Using a proof similar to the uniqueness result proved in~\cref{sec:uniqueness}, the solution to the system of equations yields
$(\tbB^{-}, \bJ^{-}) = (\tbB^{-}_{0}, \bJ^{-}_{0})$, for $\bx \in \Omega$ and $\bB^{+} = \bB^{+}_{0}$ for $\bx \in \Omega^c$. 
Given the solution of the system of integral equations, we then compute the fields $\tbB^{-},\bJ^{-}$ at $10$ points in $\Omega$ denoted $\{\bt_{j}^{-}:\: j=1,\dots,10\}$, the field $\bB^{+}$ at $10$ points in $\Omega^{c},$ denoted  $\{\bt_{j}^{+}:\: j=1,\dots,10\}$. 
Let $\tbB^{-}_{c}, \bJ^{-}_{c}$, and $\bB^{+}_{c}$ denote the numerically computed magnetic fields and current. 
Let $\varepsilon_{1}$ denote the relative $L^2$ error in the computed fields at the target locations given by
\begin{equation}
\varepsilon_{1} = \frac{1}{M}\left[\sum_{j=1}^{10} \left|\tbB^{-}_{c} (\bt^{-}_{j}) - \tbB^{-}(\bt^{-}_{j})\right|^2 + \left|\bJ^{-}_{c} (\bt^{-}_{j}) - \bJ^{-}(\bt^{-}_{j})\right|^2 + \left|\bB^{+}_{c} (\bt^{+}_{j}) - \bB^{+}(\bt^{+}_{j})\right|^2\right]^{\frac 12},
\end{equation}
where
\begin{equation}
    M=\sqrt{\int\limits_{\Gamma} \left(|q^{+}|^2 + |q^{-}|^2 + |r^{-}|^2\right) \, dS}.
\end{equation}
Similarly, let $\varepsilon_{2}$ denote the relative $L^{2}$ error for computing the fields on the boundary $\Gamma$ given by
\begin{equation}
\varepsilon_{2} = \frac{1}{M}\left[\bigintss\limits_{\Gamma} \left( \left|\tbB^{-}_{c} - \tbB^{-}\right|^2 + \left|\bJ^{-}_{c} - \bJ^{-}\right|^2 + \left|\bB^{+}_{c}  - \bB^{+}\right|^2 \right) \, dS\right]^{\frac 12}
\end{equation}
Note that this is not an inverse crime, since we do not impose boundary conditions on the tangential components of $\tbB^{-},\bJ^{-}$, and $\bB^{+}$. Furthermore, to measure the relative $L^{2}$ error, we scale by the $L^{2}$ norm, $M,$ of the densities $q^{+},q^{-},r^{-}$,  since the quadrature error estimates for evaluating layer potentials using fmm3dbie guarantees accuracy relative to the norm of the density~\cite{fmm3dbiepaper}.

For our numerical experiments, we consider the unit sphere, a twisted torus geometry, and a surface of genus 2. The boundary $\Gamma$ for the twisted torus geometry is parameterized by $\bX: [0,2\pi]^2 \to \Gamma$
with
\begin{equation}
    \bX(u,v) = \sum_{i=-1}^{2} \sum_{j=-1}^{1} \delta_{i,j} \begin{bmatrix} \cos{v} \cos{((1-i)\, u+j\, v)} \\
    \sin{v} \cos{((1-i) \, u+j \, v)} \\
    \sin{((1-i)\, u+j \, v)} 
    \end{bmatrix} \, ,
\end{equation}
where the non-zero coefficients are $\delta_{-1,-1}=0.17$, 
$\delta_{-1,0} = 0.11$, $\delta_{0,0}=1$, $\delta_{1,0}=4.5$, $\delta_{2,0}=-0.25$, $\delta_{0,1} = 0.07$, and $\delta_{2,1}= -0.45$.
(See Fig. \ref{fig:1}.) For the remainder of the paper, we will refer to this geometry as the {\em stellarator}.

Let $\Npat$ denote the number of patches describing the surface $\Gamma$,  and $N$ the total number of discretization points (the system size then would be $m = 6N +4g$). Each patch is discretized using $9$th order Vioreanu-Rokhlin discretization nodes~\cite{spectra}. In~\cref{tab:acc1}, we tabulate the errors $\varepsilon_{1}$, $\varepsilon_{2}$, and $\niter$, the number of GMRES iterations required for the relative residual to be less than $10^{-8}$ for the solutions on the unit sphere, and a stellarator, with $\lambda_{L}=1$. 

The tolerance for evaluating the layer potentials and the fast multipole acceleration $\varepsilon$ was set to $5 \times 10^{-7}$.
For the sphere, we observe the expected convergence rate of $O(h^9 + \varepsilon)$, while for the stellarator we observe a convergence rate of $O(h^8 + \varepsilon)$. The reduction in order of convergence for the stellarator is in part explained by the loss of an order of convergence in computing the harmonic vector fields as noted in~\cite{lapbel1,lapbel2}.

\begin{table}[h!]
\begin{center}
\begin{tabular}{cccccSS} \hline
     {Geometry} & $\lambda_{L}$ & {$\Npat$} & {$N$} & {$\niter$} & {$\varepsilon_{1}$  } & {$\varepsilon_{2}$}   \\ \hline
    \multirow{3}{*}{Sphere} & 1 & 12  & 540 & 23 & 5.0E-3 & 1.4E-2  \\    
    & 1& 48 & 2160 & 17 & 1.2E-6 & 2.6E-5  \\
    & 1 & 192 & 8640 & 17 & 2.6E-8 & 1.8E-7  \\ \hline
    
        \multirow{3}{*}{Stellarator} & 1 & 150 & 6750 & 78   & 1.4E-3 & 1.7E-2  \\    
    & 1 &t 600 & 27000 &  86 & 4.6E-5 & 8.1E-5  \\
    & 1 & 2400 & 108000 & 70 & 3.2E-8 & 5.7E-7   \\ \hline
\end{tabular}
\caption{Relative $L^{2}$ error in computing known analytic solution computed using the generalized Debye formulation. $\Npat$ is the number of patches, $N$ the number of discretization nodes, $\niter$ the number of GMRES iterations for the relative residual to be less than $10^{-8}$, $\varepsilon_{1}$ is the relative $L^2$ error in the evaluated solution at targets off-surface, and $\varepsilon_{2}$ is the relative $L^{2}$ error in the fields on surface.} \label{tab:acc1}
   \end{center}
\end{table}

In~\cref{tab:acc2}, we tabulate the errors $\varepsilon_{1}$, $\varepsilon_{2}$, and $\niter$ on the stellarator for $\lambda_{L} = 1,10^{-1}$, and $10^{-1}/3$. As we show in a subsequent paper,  see~\cite{EpRa2}, the second kind Fredholm system approaches a mixed first kind-second kind Fredholm system of equations as $\lambda_{L} \to 0$. Thus, the increase in number of GMRES iterations required for the solution to converge to the same residual is expected. The loss of accuracy for $\varepsilon_{2}$ as $\lambda_{L}$ is increased can be in part attributed to the quadrature error owing to the more singular nature of the kernels of the layer potentials. However, with a sufficiently fine mesh, for a fixed $\lambda_{L}$, we observe the expected convergence behavior of $O(h^8 + \varepsilon)$.

\begin{table}[h!]
\begin{center}
\begin{tabular}{cccccSS} \hline
{Geometry} & $\lambda_{L}$ & {$\Npat$} & {$N$} & {$\niter$} & {$\varepsilon_{1}$  } & {$\varepsilon_{2}$}   \\ \hline
           \multirow{4}{*}{Stellarator} & 1 & 600 & 27000 & 86   & 4.6E-5 & 8.1E-5  \\    
    & $10^{-1}$ & 600 & 27000 &  168 & 4.7E-5 & 4.9E-4  \\
    & $10^{-1}/3$ & 600 & 27000 & 230 & 5.2E-5 & 1.7E-2   \\ 
    & $10^{-1}/3$ & 2400 & 108000 & 191 & 2.9E-9 & 1.2E-5   \\ \hline
\end{tabular}
\caption{Relative $L^{2}$ error in computing known analytic solution computed using the generalized Debye formulation as $\lambda_{L}$ is varied. $\Npat$ is the number of patches, $N$ the number of discretization nodes, $\niter$ the number of GMRES iterations for the relative residual to be less than $10^{-8}$, $\varepsilon_{1}$ is the relative $L^2$ error in the evaluated solution at targets off-surface, and $\varepsilon_{2}$ is the relative $L^{2}$ error in the fields on surface.} \label{tab:acc2}
   \end{center}
\end{table}

\begin{figure}[h!]
    \centering
    \includegraphics[width=0.7\linewidth]{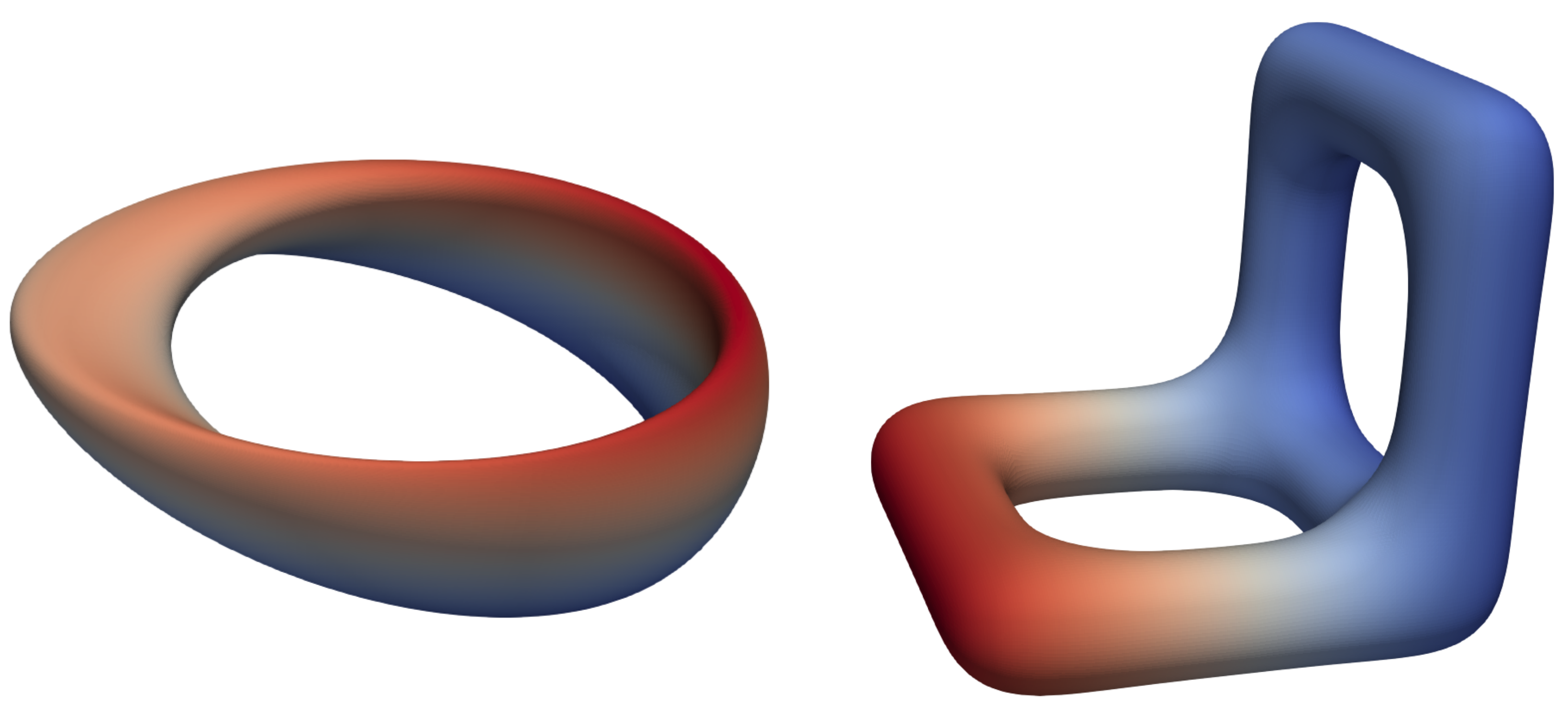}
    \caption{The boundary of a stellarator like geometry (left) and a genus 2 torus (right). The stellarator is colored using the z-coordinate.}
    \label{fig:1}
\end{figure}

\subsection{Condition number estimation on the sphere}
\label{subsec:sphere}
As noted in~\cref{rem:rep}, there exists a family of Debye representations for $\bl^{-}$ and $\bm^{-}$ which result in invertible second kind systems of integral equations for the solution of static currents in \mbox{type-I} superconductors. In this section, we study how the condition number of the resultant Fredholm system is impacted by the choice of these parameters. On the unit sphere, one can analytically compute the action of the system of integral equations on the spherical harmonics.  Moreover, owing to symmetry of the problem defined on the sphere, the action of the integral operator only depends on degree of the spherical harmonic and is independent of the order.  Detailed analyses for the spectra of various layer potentials on the sphere are discussed in~\cite{boundaryanalysis} for example. 

 We wish to explore the effect of $\sigma_{m}, \sigma_{\ell}$ on the conditioning of the system where
\begin{equation}
 \begin{split}
  \bm^- &=\frac{1}{\lambda_L}\left[ \nabla_{\Gamma} \Delta_{\Gamma}^{-1}q^-
  +\sigma_{m} \bn \times \nabla_{\Gamma} \Delta_{\Gamma}^{-1} \left(q^{+} - \frac{1}{|\Gamma|} \int_{\Gamma} q^{+} \right)  \right]\\
  \bl^-&=-\frac{1}{\lambda_L}\left[\nabla_{\Gamma} \Delta_{\Gamma}^{-1}r^- 
  + \sigma_{\ell} \bn \times \nabla_{\Gamma} \Delta_{\Gamma}^{-1} \left(q^{+} - \frac{1}{|\Gamma|} \int_{\Gamma} q^{+} \right)  
  \right] \, , 
  \end{split}
\end{equation}
Suppose that the linear system is written as map from
map from $[\rho^{-}, \rho^{+}, \mu^{-}, q^{-}, q^{+}, r^{-}]$ 
to the boundary conditions ordered in the following manner on the sphere
\begin{equation}
\begin{aligned}
 (\Delta_{\pa \Omega} + \cW) \cS_{0}^2 [\rho^{-}] -  q^{-} &= 0 \\
 (\Delta_{\pa \Omega} + \cW) \cS_{0}^2 [\rho^{+}] -  q^{+} &= 0 \\
 (\Delta_{\pa \Omega} + \cW) \cS_{0}^2 [\mu^{-}] -  r^{-} &= 0 \\
-\cS_{0}\left [\nabla_{\Gamma} \cdot \bn \times \bn \times \left(\lambda_{L} \tbB^{-} -  \bB^{+} \right) \right] &= -\cS_{0}[\nabla_{\Gamma} \cdot \bn \times \bn \times \bB^{\In}] \\
\left(\lambda_{L} \tbB^{-} - \bB^{+} \right) \cdot \bn &= \bB^{\In} \cdot \bn \\
\bJ^{-} \cdot \bn &=\bJ^{\In} \cdot \bn \, ,
\end{aligned}
\end{equation}

Suppose that $Y_{nm}(\theta,\phi)$ is the spherical harmonic of degree $n$ and order $m$. Let $j_{n}(r), h_{n}(r)$ denote the spherical Bessel function and spherical Hankel function of degree $n$ respectively and let $\delta_{0}(n)$ denote the Kronecker-delta function which is $1$ for $n=0$ and $0$ otherwise.
Let $\cA$ denote this linear operator. If $[\rho^{-}, \rho^{+}, \mu^{-}, q^{-}, q^{+}, r^{-}] = \bc Y_{nm}(\theta, \phi)$ where $\bc \in \mathbb{C}^{6}$ is a constant vector, then 
\begin{equation}
\cA \begin{bmatrix} 
\rho^{-} \\
\rho^{+} \\
\mu^{-} \\
q^{-} \\
q^{+} \\
r^{-}
\end{bmatrix} = \left(\cA_{n} \bc \right) Y_{nm} (\theta,\phi) \, ,
\end{equation}
where $\cA_{n}$ is the $6 \times 6$ matrix given by
\begin{equation}
\cA_{n} = 
\begin{bmatrix}
\frac{-n(n+1) + \delta_{0}(n)}{(2n+1)^2} & 0 & 0 & -1 & 0 & 0 \\
0 & \frac{-n(n+1) + \delta_{0}(n)}{(2n+1)^2} & 0 & 0 & -1 & 0 \\
0 & 0 & \frac{-n(n+1) + \delta_{0}(n)}{(2n+1)^2} & 0 & 0 & -1  \\
a & \frac{\sigma_{\ell} n (n+1) \left( j_{n}(k) + k j_{n}'(k) \right) h_{n}(k) }{(2n+1)^3 \lambda_{L}} & 0 & \frac{n(n+1) \left(j_{n}(k) h_{n}(k) \right)}{2n+1} & \frac{n(n+1)}{(2n+1)^2} & 0 \\
-\frac{b}{(2n+1)} & -\frac{\sigma_{\ell} n(n+1) j_{n}(k) h_{n}(k)}{\lambda_{L}(2n+1)^2} & 0 & -\frac{i j_{n}'(k)h_{n}(k)}{\lambda_{L}} & \frac{(n+1)}{2n+1} & 0 \\
0 & -\frac{\sigma_{m} j_{n}(k)h_{n}(k)n(n+1)}{\lambda_{L}^2 (2n+1)^2} & \frac{b}{\lambda_{L}} & 0 & 0 & \frac{i j_{n}'(k)h_{n}(k)}{\lambda_{L}^2} 
\end{bmatrix} \, ,
\end{equation}
where
\begin{equation}
\begin{aligned}
a &= \frac{n(n+1) \left( i\left((n+1)j_{n}(k)h_{n-1}(k) + nj_{n+1}(k)h_{n}(k)-kj_{n+1}(k)h_{n-1}(k) \right) \right)}{\lambda_{L} (2n+1)^3} \, , \\
b &= \frac{in(n+1) \left(j_{n}(k)h_{n-1}(k)-j_{n+1}(k)h_{n}(k) \right)}{\lambda_{L}(2n+1)} \, ,
\end{aligned}
\end{equation}
and $k= i/\lambda_{L}$.
For the Fredholm system above, the ``identity" block for the system of equations is given by
\begin{equation}
D = \begin{bmatrix}
-1/4 & 0 & 0 & 0 & 0 & 0 \\
0 & -1/4 & 0 & 0 & 0 & 0 \\
0 & 0 & -1/4 & 0 & 0 & 0 \\
0 & 0 & 0 & -\lambda_{L}/4 & 1/4 & 0 \\
0 & 0 & 0 & \lambda_{L}/2 & 1/2 & 0 \\
0 & 0 & 0 & 0 & 0 & -1/2 
\end{bmatrix} \, .
\end{equation}

In practice, to improve the conditioning of system of Fredholm equations, typically the system is preconditioned with $D^{-1}$ which is done for the following as well. Let $s_{j,n}$, $j=1,2,\ldots 6$ denote the singular values of $D^{-1} \cA_{n}$. 
Let 
\begin{equation} 
\smax = \max_{\substack{j=1,2,\ldots 6\\n \geq 0}} s_{j,n} \, , \quad \text{and} \quad \min_{\substack{j=1,2,\ldots 6\\n \geq 0}} s_{j,n}
\end{equation}
The condition number of the system of equations denoted by $\kappa$ is then given by $\kappa = \smax/\smin$. In~\cref{fig:2}, we plot $\kappa$ as a function of $\sigma_{m}$ and $\sigma_{\ell}$ for $\lambda_{L}=1,10^{-1}, 10^{-2}$. As we can see from the figures, for all values of $\lambda_{L},$ the system seems to be optimally conditioned when $\sigma_{m}=\sigma_{\ell}=0.$

\begin{figure}[h!]
    \centering
    \includegraphics[width=0.9\linewidth]{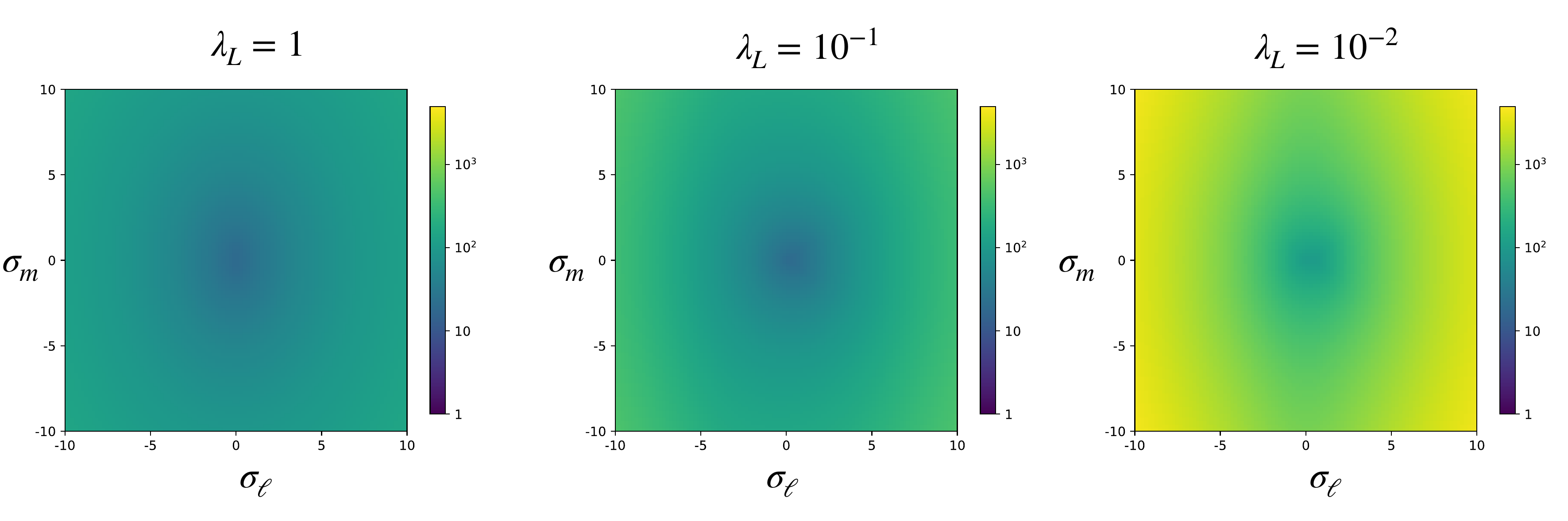}
    \caption{$\kappa$ as a function of $\sigma_{\ell},\sigma_{m}$ for $\lambda_{L}=1$ (left) ,$\lambda_{L}=10^{-1}$ center, and $\lambda_{L}=10^{-2}$ (right).}
    \label{fig:2}
\end{figure}

\begin{remark}
Since the system of equations is a second kind Fredholm equation, one can show that $D^{-1} \cA_{n} \to I$ as $n\to \infty$. Thus in practice, one can truncate the computation of $\smax$ and $\smin$ to $n\leq \nmax (\lambda_{L})$.
\end{remark}
	
\subsection{Static currents}
We now turn our attention to the computation of the static currents in the stellarator and a genus $2$ torus (see~\cref{fig:1}).
In~\cref{fig:stell-statj}, we plot the surface current $\bJ^{-}$ on the stellarator with $\lambda_{L} =1,10^{-1}$, and $10^{-1}/3$. For $\lambda_{L} = 1,10^{-1}$, we discretize the stellarator with $\Npat=600$ $8$th order patches, and for $\lambda_{L} = 10^{-1}/3$, we discretize the stellarator with $\Npat=2400$ $8$th order patches. 
In~\cref{fig:stell-statj-cross-sec}, we plot the current $\bJ^{-} \cdot \bn$ inside the cross section of the stellarator on the XZ plane at $y=0$. Here $\bn=(0,1,0).$ The current decays in the interior as $O(e^{-d(x,\pa \Omega)/\lambda_{L}})$ as expected.

\begin{figure}[h!]
    \centering
    \includegraphics[width=\linewidth]{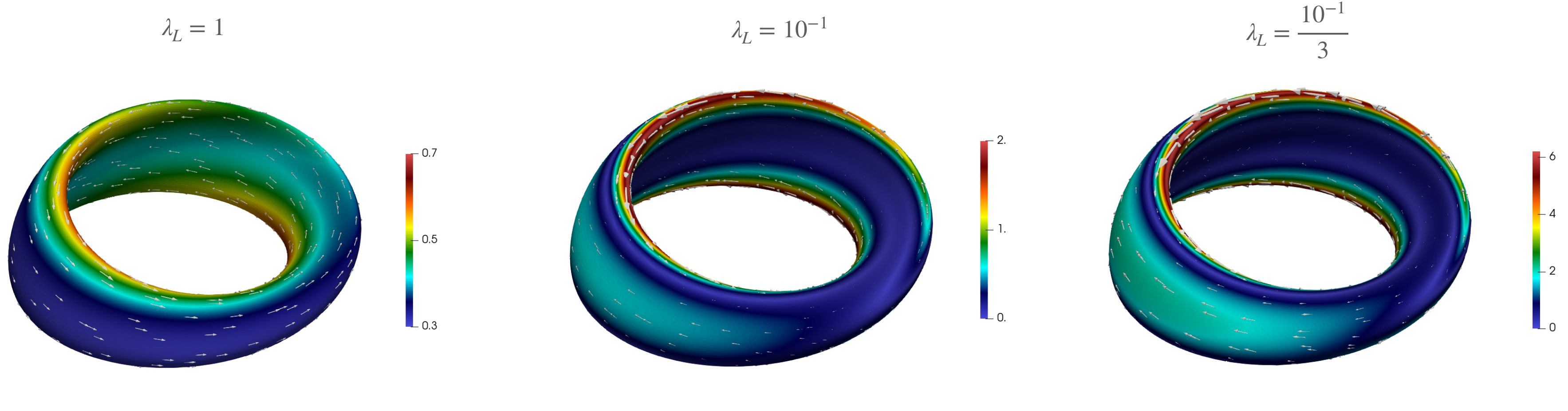}
    \caption{Static currents $\bJ^{-}$ on the surface of the stellarator for $\lambda_{L} = 1$ (left), $\lambda_{L} = 10^{-1}$ (center), and $\lambda_{L} = 10^{-1}/3$ (right).}
    \label{fig:stell-statj}
\end{figure}

\begin{figure}[h!]
    \centering
    \includegraphics[width=\linewidth]{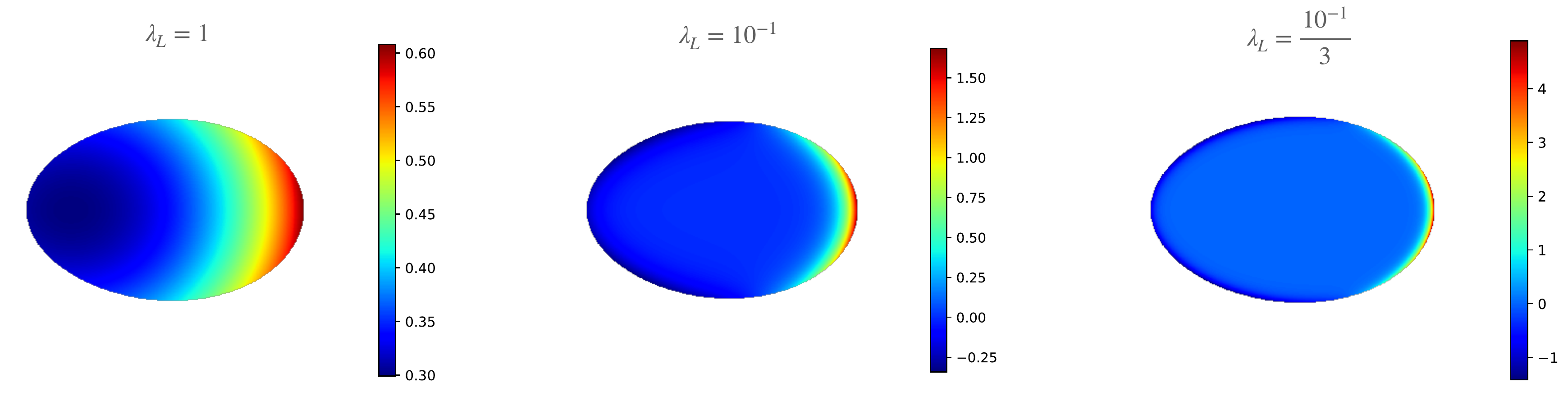}
    \caption{Static currents $\bJ^{-} \cdot \bn$ inside the stellarator on the XZ plane $y=0$ for $\lambda_{L} = 1$ (left), $\lambda_{L} = 10^{-1}$ (center), and $\lambda_{L} = 10^{-1}/3$ (right).}
    \label{fig:stell-statj-cross-sec}
\end{figure}

In~\cref{fig:genus2-statj}, we plot a static current, and the corresponding magnetic fields in the genus $2$ torus with $\lambda_{L}=1$. Both the current fluxes $a_{j}$, $j=1,2,$ were set to $1$. The torus is contained in a bounding box of size $4\times 4 \times 4$. A high order representation of the geometry was obtained by using the surface smoothing algorithm of~\cite{surface-smoother} on a second order triangulation of a genus 2 domain formed by merging two hollow rectangular parallelepipeds. The geometry was discretized with $\Npat=2496$, $9th$ order triangles. The errors $\varepsilon_{1},\varepsilon_{2}$ for the analytic solution test discussed in~\cref{sec:acc} are $\varepsilon_{1} = 1.4 \times 10^{-5}$, and $\varepsilon_{2} = 2.5 \times 10^{-4}$.

\begin{figure}[h!]
    \centering
    \includegraphics[width=0.8\linewidth]{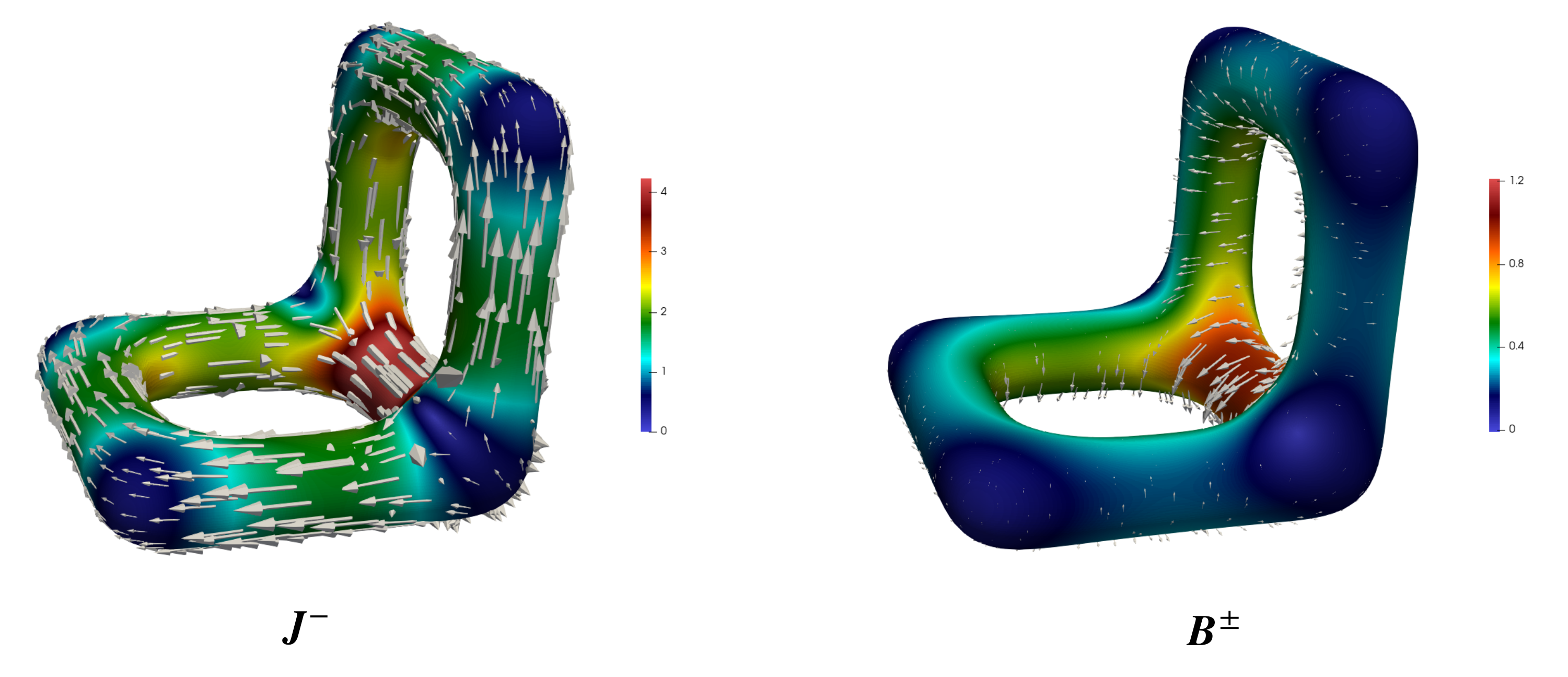}
    \caption{Static current $\bJ^{-}$, and the magnetic fields $\bB^{\pm}$ on the surface of a genus 2 torus for $\lambda_{L} = 1$. }
    \label{fig:genus2-statj}
\end{figure}


\section{Conclusions}
In this note we have studied the standard scattering problem for a static, low-field, type-I superconductor using the standard London equation formulation within the superconducting material. Amongst other things, we have shown that, for a superconductor of genus $g>0,$ the fluxes through the $A$-cycles can be specified arbitrarily and uniquely determine a current distribution and a corresponding external, outgoing magnetic field. Moreover this field can be obtained by applying the Biot-Savart formula to the current within the superconductor. 

We have shown that, for any $\lambda_L>0,$ the Debye source formulation provides boundary integral equations, which are Fredholm equations of second-kind, that can be solved for an arbitrary incoming field and fluxes through the $A$-cycles. In numerical examples, we demonstrate the practical efficacy of this method, provided $\lambda_L$ is not too small.  In real applications $\lambda_L$ is typically quite small, and the equations given in this paper will become very difficult to solve. In a second paper, see~\cite{EpRa2}, we show that these equations have limits, in the strong operator sense, as $\lambda_L\to 0^+.$ The limiting system of equations is nearly decoupled and consists of Fredholm equations, some of first kind and some of second kind. These equations can easily be solved and will provide an effective means to accurately approximate the solutions to~\eqref{eqn1},~\eqref{eqn3},~\eqref{eqn_bc} in physically reasonable cases.

\section{Acknowledgements}
The authors would like to thank Antoine Cerfon, Leslie Greengard and, Michael O'Neil for many useful discussions. CLE would also like to thank the Flatiron Institute for hosting him during the 2020-21 academic year, during which most of this research was done, and for their generous, ongoing support.

{\small
  \bibliographystyle{siam}

}

\end{document}